 \newtheorem{thm}{Theorem}[section]
 \newtheorem{corollary}[thm]{Corollary}
 \newtheorem{lemma}[thm]{Lemma}
 \newtheorem{proposition}[thm]{Proposition}
 \theoremstyle{definition}
 \theoremstyle{remark}
 \newtheorem{remark}[thm]{Remark}
 \newtheorem{example}{Example}
 \numberwithin{equation}{section}
 \newcommand{\R}{\mathbb{R}}
\begin{document}

%
%
%
%
%
%
%
%
%

\title[Convex Improper Affine Maps]
 {Singularities of Convex Improper Affine Maps}

\author[M.Craizer]{Marcos Craizer}

\address{%
Departamento de Matem\'{a}tica- PUC-Rio\br
Rio de Janeiro\br
Brazil}

\email{craizer@puc-rio.br}


\subjclass{ 53A15}

\keywords{ Improper affine spheres, Generating family}

\date{July 13, 2010}

\begin{abstract}
In this paper we consider convex improper affine maps of the $3$-dimensional affine space
and classify their singularities. The main tool developed is a generating family with properties that
closely resembles the area function for non-convex improper affine maps. 
\end{abstract}

\maketitle

\section{Introduction}

Improper affine spheres are surfaces in $\R^3$ whose affine normal vector field is constant. 
This type of surfaces has many interesting properties and is being studied for a long time (\cite{Calabi58},\cite{Calabi70}). Improper affine maps are improper affine spheres with some admissible singularities.
They were first considered in  \cite{Martinez05} in the convex case and in  \cite{Nakajo09} in the non-convex case. 

Every non-convex improper affine map can be obtained from a pair of planar curves. Denoting by $m$ the mid-point of a chord 
and by $f$ the area of the planar region bounded by this chord, the two curves and another fixed chord, 
$(m,f)$ is a non-convex improper affine map
whose singular set projects into the Wigner caustic of the pair of  curves (\cite{Rios10}).
The singular set of a non-convex improper affine maps was studied in  \cite{Craizer11}, where it is proved that
generically it consists of cuspidal edges and swallowtails. 

In this work we are concerned with the convex case.   In \cite{Martinez05}, it is proved that any convex improper affine sphere can be obtained by the following construction:
Consider a planar map 
$x(s,t)=(A(s,t),B(s,t))$ whose coordinates $A$ and $B$ are harmonic functions. Let $C(s,t)$ and $D(s,t)$ be conjugate harmonics of $B(s,t)$ and $-A(s,t)$
in such a way that $B+iC$ and $A-iD$ become holomorphic. Denote by $g(s,t)$ a function whose gradient with respect to $x$ 
is $(C,D)$. Then $q=(x,g)$ is a convex improper affine sphere. Admitting singularities, the map $q$ is called a {\it convex improper affine map}. 
 
We shall study the singular set of a convex improper affine map. The singular set of the planar map $x$ 
is well-known: Generically it admits regular singular points and cusps. 
The main result of this paper is that the corresponding 
singular points of the improper affine map are cuspidal edges and swallowtails. 

The main tool used in proving our results is the construction of a generating family. In the non-convex case, the generating family is the area function and 
was studied in \cite{Giblin08}. The generating family that we construct here for the convex case has not an obvious geometric meaning, but was inspired
by the area function of the non-convex case.

The paper is organized as follows: In section 2 we recall the representation formula for convex improper affine maps. In section 3
we discuss the well-known results concerning singularities of the planar map. Section 4 contains the main calculations of the paper:
we define the generating family and prove many of its properties. In section 5 we classify the singularities of the convex improper affine maps.

\paragraph*{Acknowledgements:} The author wants to thank CNPq for financial support during the preparation of the paper.

\section{Improper affine maps}

Consider a pair of  holomorphic maps $(s,t)\to A(s,t)-iD(s,t)$ and $(s,t)\to B(s,t)+iC(s,t)$, for $(s,t)$ in some simply connected domain $\Omega\subset\R^2$. Thus we have 
$A_s=-D_t$, $A_t=D_s$, $B_s=C_t$ and $B_t=-C_s$. 
Denote by 
$$
x(s,t)=(A(s,t), B(s,t))
$$
the planar map and by
$$
c(s,t)=(-D(s,t), C(s,t))
$$
the conjugate map. 
The derivatives $x_s=(A_s,B_s),x_t=(A_t,B_t)$, $c_s=(-D_s,C_s),c_t=(-D_t,C_t)$ satisfy the conditions $x_s=c_t$ and $x_t=-c_s$. For any vectors $u,v\in\R^2$, denote by $\left[ u,v\right]$
the determinant of the matrix whose columns are $u$ and $v$. Let
$$
\delta(s,t)=[x_s,x_t]=A_sB_t-A_tB_s.
$$

\begin{lemma}
There exists a function $g:\Omega\to\R$ whose symplectic gradient with respect to $x$ is $c$, i.e., 
\begin{equation}\label{defineg}
\left\{
\begin{array}{ll}
g_s=\left[ x_s, c  \right]\\
g_t=\left[ x_t, c  \right]
\end{array}
\right.
\end{equation}
The function $g$ is uniquely defined by $x$ and $c$ up to an additive constant. At points $(s,t)$ such that $\delta(s,t)\neq 0$, the map $q=(x,g)$ is 
an improper affine sphere. Admitting singularities, the map $q=(x,g)$ is an improper affine map.
\end{lemma}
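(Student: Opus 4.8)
The plan is to establish the three assertions in turn; none of them needs the generating family constructed later, only the Cauchy--Riemann relations recorded above. First, the pair of equations in (\ref{defineg}) says exactly that the $1$-form $\omega=[x_s,c]\,ds+[x_t,c]\,dt$ on $\Omega$ equals $dg$. Since $\Omega$ is simply connected, it is enough to check that $\omega$ is closed, i.e. that $\partial_t[x_s,c]=\partial_s[x_t,c]$. Expanding by bilinearity, the two copies of $[x_{st},c]$ cancel and we are reduced to the identity $[x_s,c_t]=[x_t,c_s]$; but $x_s=c_t$ gives $[x_s,c_t]=[x_s,x_s]=0$ and $x_t=-c_s$ gives $[x_t,c_s]=[x_t,-x_t]=0$, so both sides vanish. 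Hence $\omega=dg$ for some $g$, and $g$ is determined up to an additive constant because $\Omega$ is connected and $dg$ is prescribed.

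For the affine-sphere claim, fix a point where $\delta\neq 0$. There $x=(A,B)$ is a local diffeomorphism onto a planar region, so locally $g=\psi(A,B)$ for a smooth function $\psi=\psi(X,Y)$. Writing the chain rule $g_s=\psi_XA_s+\psi_YB_s$, $g_t=\psi_XA_t+\psi_YB_t$, comparing with (\ref{defineg}), and using invertibility of the Jacobian, we get $\psi_X=C$, $\psi_Y=D$ --- the representation recalled in the Introduction. Differentiating once more, with $\partial_X,\partial_Y$ written in terms of $\partial_s,\partial_t$ through the inverse Jacobian and the relations $C_s=-B_t$, $C_t=B_s$, $D_s=A_t$, $D_t=-A_s$ substituted, one finds
\[
\psi_{XX}=-\frac{B_s^2+B_t^2}{\delta},\qquad \psi_{XY}=\frac{A_sB_s+A_tB_t}{\delta},\qquad \psi_{YY}=-\frac{A_s^2+A_t^2}{\delta}.
\]
The Lagrange identity $(A_s^2+A_t^2)(B_s^2+B_t^2)-(A_sB_s+A_tB_t)^2=(A_sB_t-A_tB_s)^2=\delta^2$ then yields $\psi_{XX}\psi_{YY}-\psi_{XY}^2=1$, so the graph $Z=\psi(X,Y)$ solves the Monge--Ampère equation $\det(\mathrm{Hess}\,\psi)=1$, which is precisely the condition for it to be an improper affine sphere with affine normal $(0,0,1)$; the sign of $\delta$ records that the graph is convex (respectively concave). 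Finally, the last sentence is a definition: away from $\{\delta=0\}$ the map $q=(x,g)$ is an improper affine sphere, and one simply calls $q$, on all of $\Omega$, an improper affine map.

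The only genuine computation is the Hessian one, and its point is the Lagrange identity that makes the Monge--Ampère constant come out to exactly $1$; this is where holomorphicity of $A-iD$ and $B+iC$ is really used. The closedness of $\omega$, the uniqueness, and the reduction to a graph are all formal, and the main care will go into the bookkeeping of signs in the Cauchy--Riemann relations when differentiating $C$ and $D$.
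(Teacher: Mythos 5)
Your proposal is correct, and for the main assertion it takes a genuinely different route from the paper. The closedness computation is the same one the paper performs (the paper writes $(g_s)_t=[x_{st},c]=(g_t)_s$, which is exactly your cancellation $[x_s,c_t]=[x_s,x_s]=0$, $[x_t,c_s]=-[x_t,x_t]=0$ made explicit), and the uniqueness statement is handled identically. For the affine-sphere claim, however, the paper stays in the parametrization $(s,t)$: it decomposes $q_{ss}$, $q_{tt}$, $q_{st}$ in the frame $\{q_s,q_t,\xi\}$ with $\xi=(0,0,1)$, reads off the affine metric $h=-\delta\,\mathrm{Id}$, and checks that its volume form $|\delta|$ agrees with $[q_s,q_t,\xi]$, which is the apolarity condition certifying that the constant field $\xi$ is the Blaschke normal. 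You instead pass to graph coordinates $Z=\psi(X,Y)$ over the image of $x$, compute the Hessian of $\psi$ via Cramer's rule and the Cauchy--Riemann relations, and obtain $\det\mathrm{Hess}\,\psi=1$ from the Lagrange identity, then invoke the classical equivalence between Hessian-one graphs and improper affine spheres with vertical affine normal. Both arguments are sound (I checked your Hessian entries and the determinant computation). The paper's version is self-contained and works uniformly in the parameters $(s,t)$, which is convenient for the later singularity analysis; yours makes transparent the link to the Monge--Amp\`ere equation $\det\mathrm{Hess}\,\psi=1$ (the ``Hessian one equation'' of the cited literature) and isolates exactly where holomorphicity enters, at the cost of relying on that external characterization and on the local invertibility of $x$, which of course is available precisely where $\delta\neq0$.
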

\begin{proof}
The function $g$ must satisfy \eqref{defineg}. Thus 
$(g_s)_t=\left[ x_{st}, c \right]=(g_t)_s.$
Since $\Omega$ is simply connected, the existence of $g$ is proved. Now
\[
\left\{
\begin{array}{l}
q_s=\left( x_s, [x_s, c] \right)\\
q_t=\left( x_t, [x_t,c] \right).
\end{array}
\right.
\]
Considering the transversal vector field $\xi=(0,0,1)$, we shall decompose the second derivatives of $q$
in the basis $\{q_s, q_t,\xi\}$. We have
\[
\left\{
\begin{array}{l}
q_{ss}=(x_{ss}, [x_{ss},c])-(0,[x_s,x_t])\\
q_{tt}=(x_{tt}, [x_{tt},c])-(0,[x_s,x_t])\\
q_{st}=(x_{st}, [x_{st},c]).
\end{array}
\right.
\]
The coefficients of $\xi$ in this decomposition determine a metric 
\[
h=-\left[
\begin{array}{cc}
[x_s,x_t] & 0 \\
0 & [x_s,x_t]
\end{array}
\right] 
\]
such that the corresponding volume form is $\omega_h=\sqrt{|det(h)|}=|[x_s,x_t]|$. Since this volume form coincides with
$[q_s,q_t,\xi]$, we conclude that $\xi$ is in fact the Blaschke affine normal vector, and so $q=(x,g)$ is an improper affine sphere. 
\end{proof}

\begin{example}\label{example1}
Let $A(s,t)=s+\frac{s^2-t^2}{2}$, $B(s,t)=st-t$, $C(s,t)=\frac{t^2-s^2}{2}+s$ and $D(s,t)=-st-t$. Then 
\[
\begin{array}{c}
g_s=s+\frac{s^2-t^2}{2}-\frac{s(s^2+t^2)}{2}\\
g_t=t-ts-\frac{t^3}{2}-\frac{ts^2}{2}
\end{array}
\]
and thus
$$
g(s,t)=\frac{s^2+t^2}{2}+\frac{s^3}{6}-\frac{s^4+t^4}{8}-\frac{st^2}{2}-\frac{s^2t^2}{4}.
$$
\end{example}

\begin{remark}\label{remark:addconstant}
If we add a constant $(k_1,k_2)$ to $(C,D)$, then the new $g$ will be obtained from the original by the sum of $k_1A(s,t)+k_2B(s,t)$. 
\end{remark}

\begin{remark}
In \cite{Martinez05}, the following representation formula for improper affine spheres with definite metric is proved:
Given two holomorphic functions $F(z)=F_1(z)+iF_2(z)$ and $G(z)=G_1+iG_2(z)$, write 
\[
\left\{
\begin{array}{ll}
x=\overline{F}+G=\left(F_1+G_1,G_2-F_2\right)\\
n=\overline{F}-G=\left(F_1-G_1,-G_2-F_2\right)
\end{array}
\right.
\]
We can pass from this formula to the one described above by taking, 
for $z=s+it$, $A(s,t)=F_1+G_1$, $B(s,t)=-F_2+G_2$, $C(s,t)=F_1-G_1$, $D(s,t)=-G_2-F_2$. 
\end{remark}

\section{Singularities of the planar map}

The singular set $S$ is defined by the equation $\delta(s,t)=0$. We shall assume
that  $(\delta_s,\delta_t)\neq(0,0)$ so that $S$ is a regular curve. This condition imply in particular that the jacobian matrix of $x$ 
has rank $1$, for any $(s,t)\in S$.

Fix a particular $(s,t)\in S$. By a rotation in the $(s,t)$-plane we may assume that $\delta_s(s,t)\neq 0$ and $\delta_t(s,t)\neq 0$. 
Thus, in a neighborhood of $(s,t)$, we may parameterize $S$ by $s$ or $t$. Since $(x_s(s,t),x_t(s,t))\neq (0,0)$, we shall assume
without loss of generality that $x_t(s,t)\neq 0$. Under this assumption, we choose $s$ as a parameter for the singularity set $S$.

Consider a parameterization $(s,\tau(s))$ of $S$ and 
define $\alpha(s)$ by 
\begin{equation}\label{definealpha}
x_s(s,\tau(s))+\alpha(s)x_t(s,\tau(s))=0
\end{equation}
at points of $S$.

\begin{thm}\label{thm:singularplanar} The following holds:
\begin{enumerate}
\item
If $\alpha(s)\neq\tau_s(s)$  then $x(S)$ is smooth at $x(s,\tau(s))$.
\item
If $\alpha(s)=\tau_s(s)$ but $\alpha'(s)\neq\tau_{ss}(s)$, then $x(S)$ has an ordinary cusp at $x(s,\tau(s))$.
\end{enumerate}
\end{thm}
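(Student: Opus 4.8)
The plan is to study the map $s\mapsto x(s,\tau(s))$ parametrizing $x(S)$ and to recognize the standard normal forms for a regular curve and for an ordinary cusp from its derivatives. Write $\gamma(s)=x(s,\tau(s))$, so that by the chain rule $\gamma'(s)=x_s+\tau_s x_t$, where the partials are evaluated at $(s,\tau(s))$. By the defining relation \eqref{definealpha} we have $x_s=-\alpha x_t$, hence $\gamma'(s)=(\tau_s-\alpha)x_t$. Since we have arranged $x_t(s,\tau(s))\neq 0$, the vector $\gamma'(s)$ is nonzero precisely when $\tau_s(s)\neq\alpha(s)$; this immediately gives part (1), that $x(S)$ is an immersed (hence smooth) curve at that point.

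For part (2) assume $\alpha(s)=\tau_s(s)$, so $\gamma'(s)=0$ at the point in question. To detect an ordinary cusp I would compute $\gamma''(s)$ and $\gamma'''(s)$ and check that $\gamma''(s)\neq 0$ while $\{\gamma''(s),\gamma'''(s)\}$ is linearly independent, which is the classical criterion (via the parametrization $u\mapsto(u^2,u^3)$ up to diffeomorphism) for an ordinary cusp. Differentiating $\gamma'(s)=(\tau_s-\alpha)x_t$ gives $\gamma''(s)=(\tau_{ss}-\alpha')x_t+(\tau_s-\alpha)(x_t)'$; at our point the second term vanishes, so $\gamma''(s)=(\tau_{ss}-\alpha')x_t$, which is nonzero exactly under the hypothesis $\alpha'(s)\neq\tau_{ss}(s)$ (again using $x_t\neq0$). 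Differentiating once more, $\gamma'''(s)=(\tau_{sss}-\alpha'')x_t+2(\tau_{ss}-\alpha')(x_t)'+(\tau_s-\alpha)(x_t)''$; at the point the last term drops, leaving $\gamma'''(s)=(\tau_{sss}-\alpha'')x_t+2(\tau_{ss}-\alpha')(x_t)'$.

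Thus $\gamma''(s)$ is a nonzero multiple of $x_t$, and $\gamma'''(s)$ has a component $2(\tau_{ss}-\alpha')(x_t)'$ transverse to $x_t$ provided $(x_t)'=\tfrac{d}{ds}x_t(s,\tau(s))$ is not parallel to $x_t(s,\tau(s))$. The main obstacle is exactly this transversality: I must show that along $S$ the vectors $x_t$ and $\tfrac{d}{ds}x_t$ are linearly independent at the relevant point. This should follow from the standing regularity hypothesis $(\delta_s,\delta_t)\neq(0,0)$ together with the holomorphicity relations $A_s=-D_t$, etc.; concretely, $\tfrac{d}{ds}x_t = x_{st}+\tau_s x_{tt}$, and if this were parallel to $x_t$ one could combine it with $x_s=-\alpha x_t$ to force $\delta$ and enough of its derivatives to vanish, contradicting $(\delta_s,\delta_t)\neq(0,0)$. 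Once this transversality is established, $\gamma''(s)$ and $\gamma'''(s)$ are independent, so $\gamma$ has an ordinary cusp at $x(s,\tau(s))$, completing part (2).
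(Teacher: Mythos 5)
Your argument is correct, and it is a genuinely different route from the one the paper takes. For this statement the paper simply cites Whitney and then re-derives it in Section \ref{section:generating} as a by-product of the generating family $G(s,x)$: Propositions \ref{prop:GsGss}, \ref{versalA2}, \ref{GsGssGsss} and \ref{GsGssGsssGssss} identify the $A_2$ and $A_3$ points of $G$ with the conditions $\alpha\neq\tau_s$, resp.\ $\alpha=\tau_s$, $\alpha'\neq\tau_{ss}$, and the versality of $G$ there forces its bifurcation set --- which is exactly $x(S)$ --- to be locally a smooth curve, resp.\ an ordinary cusp. Your proof is the direct elementary one: parametrize $x(S)$ by $\gamma(s)=x(s,\tau(s))$, note $\gamma'=(\tau_s-\alpha)x_t$, and apply the standard derivative tests for an immersion and for $A$-equivalence to $(u^2,u^3)$. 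What the paper's heavier machinery buys is that the same family $G$ simultaneously controls the three-dimensional singularities of $q=(x,g)$ in Section 5; your approach is shorter and self-contained for the planar statement.

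The one step you leave as a sketch --- that $x_t$ and $\frac{d}{ds}x_t(s,\tau(s))=x_{st}+\tau_s x_{tt}$ are independent at the cusp point --- does hold, but your proposed justification is slightly off target. Using $x_s=-\tau_s x_t$ at that point (since $\alpha=\tau_s$ there), one computes
$\delta_t=[x_{st},x_t]+[x_s,x_{tt}]=-\left[x_t,\,x_{st}+\tau_s x_{tt}\right]$,
so parallelism of these two vectors is exactly the vanishing of $\delta_t$ and says nothing about $\delta_s$; the hypothesis $(\delta_s,\delta_t)\neq(0,0)$ alone would not rule it out. What saves you is that the setup of the theorem already presupposes the normalization $\delta_t\neq0$ (arranged by a rotation of the $(s,t)$-plane), which is needed even to write $S$ as a graph $t=\tau(s)$ and to define $\alpha$; the paper assumes this explicitly. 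With that identity in hand, $\gamma''$ is a nonzero multiple of $x_t$ and $\gamma'''$ has a nonzero component along $x_{st}+\tau_s x_{tt}$, so they are independent and the cusp criterion applies. This is, incidentally, the same computation the paper performs in the proof of Proposition \ref{versalA3}.
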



We refer to \cite{Whitney55}  for this well-known theorem. Nevertheless, in section \ref{section:generating} we shall give another proof of this result.

\begin{example}\label{example2}
Taking $A(s,t)=s+\frac{s^2-t^2}{2}$ and $B(s,t)=st-t$ as in example \ref{example1}, we get $\delta=s^2+t^2-1$. Thus the singularity set $S$ is the unit circle. For $t\neq 0$, $\tau_s=-\frac{s}{t}$ and $\alpha(s)=\frac{1+s}{t}$.
Thus $\alpha(s)=\tau_s(s)$ only for $s=-\frac{1}{2}$, $t=\pm\frac{\sqrt{3}}{2}$, and hence $x(S)$ is regular for $s\neq -\frac{1}{2}$, $t\neq 0$. 
For $s=-\frac{1}{2}$, $\alpha'(s)\neq\tau_{ss}$ and so $x(S)$ has an ordinary cusp at these points.

For $t=0$, we cannot use $s$ as a parameter for $S$, and so we use $t$. Then $\alpha(t)=0$ if and only if $s=1$. Thus $x(S)$ is regular for $s=-1$.
For $s=1$, $\alpha'(t)\neq\tau_{tt}$ and so $x(S)$ has an ordinary cusp at this point (see figure \ref{trescuspides}). 

\begin{figure}[htb]
 \centering
 \includegraphics[width=0.25\linewidth]{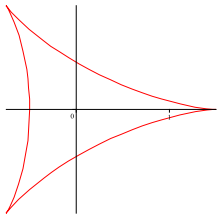}
 \caption{The image of the singularity set of example \ref{example2} by the planar map.}
\label{trescuspides}
\end{figure}
\end{example}

\section{Generating family}\label{section:generating}

We shall study the behavior of the map $q=(x,g)$ near a singular point $(s_0,t_0)$. As explained above, we may assume, without loss of generality, that $x_t(s_0,t_0)\neq 0$. 
By remark \ref{remark:addconstant}, we may add a constant $(k_1,k_2)$ to $c(s,t)$ and the new map $g$ will be obtained from the original by adding a smooth function, and thus
the type of singularity remains the same. Thus we may assume that 
\begin{equation}\label{hypothesis1}
\left[ x_t(s_0,t_0), c(s_0,t_0)\right] \neq 0.
\end{equation}
From now on we shall assume that \eqref{hypothesis1} holds and $\delta_t\neq 0$. 

In what follows we shall use the symbol $x$ with a double meaning: as a function of $(s,t)$ and as a variable in itself. We hope that this will not cause any confusion to the reader. 

\begin{lemma}
We can define a function $t=t(s,x)$ in a neighborhood $I\times J\times U$ of $(s_0,t_0,x_0)$, $x_0=x(s_0,t_0)$,
implicitly by the relation 
\begin{equation}\label{definet}
\left[ x-x(s,t), c(s,t) \right]=0.
\end{equation}
\end{lemma}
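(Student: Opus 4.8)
The plan is to apply the implicit function theorem to the function
\[
\Phi(s,t,x) = \left[ x - x(s,t), c(s,t) \right]
\]
regarded as a function of $t$, with $s$ and $x$ as parameters. First I would check that $\Phi$ is smooth, which is clear since $x(s,t)$ and $c(s,t)$ are smooth (coordinates of holomorphic maps) and the bracket $[\cdot,\cdot]$ is bilinear. Next I would verify that $\Phi$ vanishes at the base point: when $x = x_0 = x(s_0,t_0)$ we have $x - x(s_0,t_0) = 0$, so $\left[ x_0 - x(s_0,t_0), c(s_0,t_0)\right] = [0, c(s_0,t_0)] = 0$. So $(s_0,t_0,x_0)$ is a solution of \eqref{definet}.

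The key computation is the partial derivative with respect to $t$. Differentiating,
\[
\frac{\partial \Phi}{\partial t} = \left[ -x_t(s,t), c(s,t) \right] + \left[ x - x(s,t), c_t(s,t) \right].
\]
Evaluating at $(s_0,t_0,x_0)$, the second term vanishes because $x_0 - x(s_0,t_0) = 0$, leaving
\[
\frac{\partial \Phi}{\partial t}(s_0,t_0,x_0) = -\left[ x_t(s_0,t_0), c(s_0,t_0) \right],
\]
which is nonzero precisely by hypothesis \eqref{hypothesis1}. Hence the implicit function theorem yields a smooth function $t = t(s,x)$ defined on a neighborhood $I\times J\times U$ of $(s_0,t_0,x_0)$ — here I would take $U$ to be a neighborhood of $x_0$ in $\R^2$ and $I$, $J$ intervals around $s_0$, $t_0$ — such that $t(s_0,x_0) = t_0$ and $\Phi(s, t(s,x), x) = 0$ identically on this neighborhood, and $t(s,x)$ is the unique such solution in $J$.

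There is essentially no obstacle here; the only point requiring care is the bookkeeping of which variable plays which role, since $x$ is being used both as the map $(s,t)\mapsto x(s,t)$ and as an independent $\R^2$-valued variable, as the authors warn. In particular one should note that \eqref{definet} is a single scalar equation (the bracket is scalar-valued) in the scalar unknown $t$, with the vector $x$ entering only as a parameter, so the implicit function theorem applies in its one-variable form and the dimension count is consistent. I would also remark that along $x = x(s,t)$ the equation is satisfied for every $t$, so the content of the lemma is that near $x_0$ and for $x$ off the image curve the solution $t(s,x)$ is still well-defined and smooth, which is exactly what the nonvanishing of $\partial\Phi/\partial t$ guarantees.
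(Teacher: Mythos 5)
Your proof is correct and follows exactly the paper's argument: apply the implicit function theorem to $F(s,t,x)=\left[x-x(s,t),c(s,t)\right]$, noting that $\partial F/\partial t=-\left[x_t,c\right]+\left[x-x(s,t),c_t\right]$ is nonzero at $(s_0,t_0,x_0)$ because the second term vanishes there and the first is nonzero by \eqref{hypothesis1}. Your write-up is in fact slightly more explicit than the paper's, which merely asserts the nonvanishing without spelling out why the second term drops.
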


\begin{proof} 
Let $I_0\times J_0\subset E$ be a neighborhood of $(s_0,t_0)$ and $U_0\subset\R^2$ be a neighborhood of $x_0$. 
Define $F:I_0\times J_0\times U_0\to\R$ by 
$$
F(s,t,x)=\left[ x-x(s,t), c(s,t)    \right].
$$
Then 
$$
\frac{\partial F}{\partial t}=-[x_t(s,t), c(s,t)]+[x-x(s,t),c_t(s,t)] 
$$
is not zero at $(s_0,t_0,x_0)$. By the implicit function theorem, the equation $F(s,t,x)=0$ define $t$ as a function of $(s,x)$
in a neighborhood $I\times J\times U$ of $(s_0,t_0,x_0)$, where $I\subset I_0$ is a neighborhood of $s_0$, $J\subset J_0$ is a neighborhood of $t_0$ and $U\subset U_0$ is a neighborhood
of $x_0$.
\end{proof}

Define the generating family $G:I\times U\to\R$ by 
\begin{equation}\label{defineG}
G(s,x)=g(s,t(s,x)),
\end{equation}
where $t=t(s,x)$ is defined implicitly by equation \eqref{definet}. 
The goal of this section is to prove the following theorem:

\begin{thm} \label{thm:familyG}  
Take $(s,x)\in I\times U$:
\begin{enumerate}
\item 
$(s,x)$ is an $A_1$ point for $G$  if and only if $x=x(s,t)$ and $\delta(s,t(s,x))\neq 0$.
\item
$(s,x)$ is an $A_2$ point for $G$  if and only if $x=x(s,t)$, $\delta(s,t(s,x))=0$ and $\frac{\partial}{\partial s}\delta(s,t(s,x))\neq 0$, or equivalently, 
$\alpha(s)\neq\tau_s(s)$. Moreover, the unfolding $G$ is versal at this point. 
\item
$(s,x)$ is an $A_3$ point for $G$  if and only if $x=x(s,t)$, $\delta(s,t(s,x))=0$, $\frac{\partial}{\partial s}\delta(s,t(s,x))=0$ and $\frac{\partial^2}{\partial s^2}\delta(s,t(s,x))\neq 0$, or equivalently, $\alpha(s)=\tau_s(s)$ and $\alpha'(s)\neq\tau_{ss}(s)$. 
Moreover, the unfolding $G$ is versal at this point. 
\end{enumerate}
\end{thm}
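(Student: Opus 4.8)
The plan is to compute the partial derivatives of $G$ in the $s$-variable explicitly in terms of the data $(x,c,g)$, and then translate each of the singularity conditions ($A_1,A_2,A_3$) into the stated geometric conditions. First I would compute $G_s$ and $G_x$. Differentiating \eqref{defineG} and using \eqref{defineg} together with the implicit equation \eqref{definet}, one finds that $G_s(s,x)=g_s+g_t t_s=[x_s,c]+[x_t,c]\,t_s$; differentiating \eqref{definet} in $s$ gives $-[x_s+x_t t_s,\,c]+[x-x(s,t),c_s+c_t t_s]=0$, and since $[x-x(s,t),c]=0$ forces $x-x(s,t)$ to be parallel to $c$, the second bracket vanishes (here is where hypothesis \eqref{hypothesis1}, i.e. $[x_t,c]\neq0$, is used to solve for $t_s$). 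Hence $[x_s+x_t t_s,c]=0$, so $G_s=[x_s,c]+[x_t,c]t_s=[x_s+x_t t_s,c]=0$ \emph{only when} $x-x(s,t)$ is parallel to $c$ — wait, more carefully: $G_s=[x_s+x_t t_s,\,c]$ which is zero precisely on the set where $x=x(s,t)$, because off that set $t_s$ is determined differently. The cleaner route: show directly that $G_s(s,x)=0$ iff $x=x(s,t(s,x))$, by exhibiting $G_s$ as a nonzero multiple of $[x-x(s,t),\,\text{something}]$ or of a coordinate of $x-x(s,t)$.

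Once the critical-point set $\Sigma=\{(s,x): x=x(s,t(s,x))\}$ is identified, the next step is to compute $G_{ss}$ on $\Sigma$ and show it equals (a nonvanishing factor times) $\delta(s,t(s,x))$. This is the heart of the matter and the step I expect to be the main obstacle: one must differentiate $G_s=[x_s+x_t t_s,c]$ once more in $s$, substitute $x=x(s,t)$ (so that the implicit equation degenerates and $t_s$ picks up its value along $\Sigma$), and carefully collect terms using $x_s=c_t$, $x_t=-c_s$ and the harmonicity relations. The expected outcome is $G_{ss}\big|_\Sigma = \lambda(s,x)\,\delta(s,t)$ with $\lambda\neq0$ near $(s_0,t_0,x_0)$, from which part (1) ($A_1$ $\Leftrightarrow$ $\delta\neq0$) follows immediately. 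For the equivalence with $\alpha(s)\neq\tau_s(s)$ in parts (2)–(3), I would relate $\frac{\partial}{\partial s}\delta(s,t(s,x))$ along $\Sigma$ to $\frac{d}{ds}\delta(s,\tau(s))$: on $\Sigma$ one has $x=x(s,t(s,x))$, and comparing with the parameterization $(s,\tau(s))$ of the zero set $S=\{\delta=0\}$, the restriction of $t(s,x)$ to the critical locus lying over $S$ must agree with $\tau$; then $\frac{d}{ds}\delta(s,\tau(s))=\delta_s+\delta_t\tau_s$, and using \eqref{definealpha} (which says $x_s+\alpha x_t=0$, equivalently $x_s,x_t$ are dependent with ratio $\alpha$) together with the formula $\delta=[x_s,x_t]$ one shows this derivative vanishes iff $\alpha=\tau_s$. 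The second-derivative condition $\alpha'\neq\tau_{ss}$ corresponds similarly to $\frac{\partial^2}{\partial s^2}\delta\neq0$.

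With the $A_k$ classification in hand, the remaining task is versality of the unfolding $G(s,x)$ of the one-variable function $s\mapsto G(s,x_0)$. By the standard criterion (Bruce–Giblin style), for an $A_2$ singularity one must check that the map $x\mapsto \frac{\partial G}{\partial s}(s_0,x)$ has nonzero derivative (equivalently, $\frac{\partial^2 G}{\partial x_i\partial s}$ spans $\R$), and for an $A_3$ singularity that $(\frac{\partial^2 G}{\partial x_i\partial s},\frac{\partial^3 G}{\partial x_i\partial s^2})_{i=1,2}$ together with $1$ span the local algebra, i.e. that $\frac{\partial G}{\partial s}$ and $\frac{\partial^2 G}{\partial s^2}$, as functions of $x$, have independent differentials at $x_0$. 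Since we will have identified $\frac{\partial G}{\partial s}$ (vanishing) with the two components of $x-x(s,t)$ essentially, and $\frac{\partial^2 G}{\partial s^2}$ with $\delta$, the Jacobian of $(x-x(s,t),\delta)$ with respect to $x$ is, after the identification, controlled by the non-degeneracy assumption $(\delta_s,\delta_t)\neq(0,0)$ imposed in Section 3; I would verify that this assumption is exactly what makes the relevant Jacobian nonsingular, completing the versality claims. The one genuinely delicate computation, to be done by hand, is the second one: establishing $G_{ss}|_\Sigma = (\text{unit})\cdot\delta$, since everything else is bookkeeping once that identity is known.
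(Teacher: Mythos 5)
Your overall strategy---compute successive $s$-derivatives of $G$, show $G_{ss}$ equals a unit times $\delta$ on the critical set, and verify the Bruce--Giblin versality criteria---is the same as the paper's. But there is a genuine gap in the step translating the derivative conditions on $\delta$ into the conditions on $\alpha$ and $\tau$. You propose to identify the restriction of $t(s,x)$ to the critical locus over $S$ with $\tau(s)$ and then argue that $\frac{d}{ds}\delta(s,\tau(s))=\delta_s+\delta_t\tau_s$ ``vanishes iff $\alpha=\tau_s$''. This cannot work: $\delta$ vanishes identically along $S$, so $\frac{d}{ds}\delta(s,\tau(s))\equiv 0$ and carries no information. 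The quantity in the theorem is the partial derivative of $\delta(s,t(s,x))$ at \emph{fixed} $x$, i.e.\ $\delta_s+\delta_t t_s$, and the whole point is that $t_s\neq\tau_s$ in general. The missing ingredient is the lemma that at a critical point with $\delta=0$ one has $\frac{\partial}{\partial s}x(s,t(s,x))=x_s+x_t t_s=0$, so that $t_s=\alpha(s)$ is the \emph{kernel} direction of $dx$, not the tangent direction of $S$; then $\frac{\partial}{\partial s}\delta=\delta_t(\alpha-\tau_s)$ (using $\delta_s+\delta_t\tau_s=0$ and $\delta_t\neq0$), which gives the stated equivalence. The second-order analogue, $\frac{\partial^2}{\partial s^2}x=0$ (equivalently $t_{ss}=\alpha'(s)$) at an $A_3$ candidate, is likewise indispensable for part (3).

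Two further points. First, ``everything else is bookkeeping'' undersells parts (2) and (3): you still need $G_{sss}=(\text{unit})\,\frac{\partial}{\partial s}\delta$ on $\{x=x(s,t),\ \delta=0\}$ and $G_{ssss}=(\text{unit})\,\frac{\partial^2}{\partial s^2}\delta$ one level deeper, and these computations are only tractable because the identities $\frac{\partial}{\partial s}x=0$ and $\frac{\partial^2}{\partial s^2}x=0$ kill the cross terms, combined with repeated differentiation of the implicit relation \eqref{definet} to evaluate $\bigl[\frac{\partial^k}{\partial s^k}x,c\bigr]$. Second, in part (1) your argument gives only one inclusion: $x=x(s,t)$ implies $G_s=[x-x(s,t),\frac{\partial}{\partial s}c]=0$, but the converse needs an extra observation (both loci are smooth surfaces of the same dimension, hence locally coincide). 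Your earlier claim that $[x-x(s,t),c_s+c_t t_s]$ vanishes because $x-x(s,t)$ is parallel to $c$ is a non sequitur and should be dropped. The versality outline is essentially right, though the $A_3$ Jacobian condition reduces specifically to $\delta_t\neq0$ (arranged by the preliminary rotation), not merely to $(\delta_s,\delta_t)\neq(0,0)$.
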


\subsection{Singular set of the generating family}

Differentiating \eqref{definet} with respect to $s$ we obtain
\begin{equation}\label{derivativet}
-\left[ \frac{\partial}{\partial s}x(s,t), c(s,t)\right]+\left[ x-x(s,t),  \frac{\partial}{\partial s}c(s,t)\right]=0.
\end{equation}
Now 
\begin{equation}\label{derivativeg}
G_s(s,x)=g_s+g_t t_s=\left[ x_s+x_tt_s, c \right]=\left[ \frac{\partial}{\partial s}x(s,t), c(s,t) \right],
\end{equation}
and so \eqref{derivativet} implies that
\begin{equation}\label{derivativeg2}
G_s(s,x)=\left[ x-x(s,t),  \frac{\partial}{\partial s}c(s,t)  \right].
\end{equation}

\begin{proposition}\label{prop:Gs}
$G_s(s,x)=0$ if and only if $x=x(s,t)$.
\end{proposition}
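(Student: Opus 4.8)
The plan is to obtain both directions from formula \eqref{derivativeg2} for $G_s$ together with the defining relation \eqref{definet}, the whole thing reducing to the claim that $c(s,t)$ and its total $s$-derivative $\frac{\partial}{\partial s}c(s,t)=c_s+c_t t_s$ are linearly independent on a sufficiently small neighborhood. The implication $x=x(s,t(s,x))\Rightarrow G_s(s,x)=0$ is immediate: if $x-x(s,t)=0$, then \eqref{derivativeg2} gives $G_s(s,x)=0$ with no computation at all.

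For the converse I would first shrink $I\times J\times U$ so that $[x_t(s,t),c(s,t)]\neq 0$ for every $(s,t)$ with $s\in I$ and $t\in J$; this is possible by \eqref{hypothesis1} and continuity, and in particular it forces $c(s,t)\neq 0$ there. The relation \eqref{definet} then says exactly that $x-x(s,t)$ lies on the line spanned by $c(s,t)$, so there is a smooth function $\lambda=\lambda(s,x)$ with $x-x(s,t)=\lambda\,c(s,t)$, and $\lambda(s,x)=0$ precisely when $x=x(s,t)$. Plugging this into \eqref{derivativeg2} gives $G_s(s,x)=\lambda\,[\,c(s,t),\frac{\partial}{\partial s}c(s,t)\,]$, so it is enough to prove that $[\,c(s,t),\frac{\partial}{\partial s}c(s,t)\,]$ never vanishes on the shrunk neighborhood.

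That last point is the only genuine computation. Using $x_s=c_t$ and $x_t=-c_s$ one has $\frac{\partial}{\partial s}c=-x_t+x_s t_s$, hence $[\,c,\frac{\partial}{\partial s}c\,]=[x_t,c]-t_s[x_s,c]$; and implicit differentiation of \eqref{definet}, after substituting $x-x(s,t)=\lambda c$, gives $t_s=(\lambda[x_t,c]-[x_s,c])/([x_t,c]+\lambda[x_s,c])$, whose denominator is nonzero throughout because it equals minus the $t$-derivative of the left-hand side of \eqref{definet}, i.e.\ the quantity the implicit function theorem required to be nonzero. Substituting and cancelling the cross terms leaves
\[
\left[\, c(s,t),\ \frac{\partial}{\partial s}c(s,t)\,\right]=\frac{[x_s,c]^2+[x_t,c]^2}{[x_t,c]+\lambda[x_s,c]},
\]
and the numerator is strictly positive since $[x_t,c]\neq 0$. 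Therefore $[\,c,\frac{\partial}{\partial s}c\,]\neq 0$, and $G_s(s,x)=0$ forces $\lambda=0$, i.e.\ $x=x(s,t)$. I expect the main difficulty to be pure bookkeeping: keeping the antisymmetry and sign conventions of $[\cdot,\cdot]$ straight while eliminating $t_s$ and $\lambda$, and not forgetting to shrink $I\times J\times U$ at the start so that $[x_t,c]$ (hence also the relevant $t$-derivative) stays bounded away from zero.
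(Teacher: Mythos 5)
Your proof is correct, but it follows a genuinely different route from the paper's. The paper argues softly: it sets $M=\{x=x(s,t(s,x))\}$ and $N=\{G_s=0\}$, notes from \eqref{derivativeg2} that $M\subset N$, asserts that both are smooth surfaces in the three-dimensional set $I\times U$, and concludes $M=N$ after shrinking the neighborhood; this is short but leans on $N$ being a regular level set of $G_s$, which implicitly uses $(G_s)_x=(G_x)_s\neq 0$, a computation only carried out later in proposition \ref{versalA2}. You instead factor $G_s$ explicitly: since \eqref{hypothesis1} lets you shrink so that $\left[x_t,c\right]\neq 0$ (hence $c\neq 0$) throughout, relation \eqref{definet} gives $x-x(s,t)=\lambda\, c(s,t)$ with $\lambda$ smooth and vanishing exactly where $x=x(s,t)$, and then \eqref{derivativeg2} yields $G_s=\lambda\left[c,\frac{\partial}{\partial s}c\right]$. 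Your elimination of $t_s$ is right: with $c_s=-x_t$, $c_t=x_s$ one gets $\left[c,\frac{\partial}{\partial s}c\right]=\left[x_t,c\right]-t_s\left[x_s,c\right]$, the cross terms $\pm\lambda\left[x_s,c\right]\left[x_t,c\right]$ cancel, and the denominator $\left[x_t,c\right]+\lambda\left[x_s,c\right]$ is exactly $-\frac{\partial F}{\partial t}$, nonzero by the implicit function theorem hypothesis, so $\left[c,\frac{\partial}{\partial s}c\right]=\bigl(\left[x_s,c\right]^2+\left[x_t,c\right]^2\bigr)/\bigl(\left[x_t,c\right]+\lambda\left[x_s,c\right]\bigr)\neq 0$. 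What your approach buys is a self-contained identity $G_s=\lambda\cdot(\text{nonvanishing factor})$ that delivers both implications at once and makes no appeal to transversality or to later computations; what it costs is the bookkeeping you anticipated, which you have carried out correctly.
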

\begin{proof}
Denote $M=\{(s,x)\in I\times U|\ x=x(s,t(s,x))\}$ and $N=\{(s,x)\in I\times U|\ G_s(s,x)=0\}$. Then $M$ and $N$ are smooth surfaces and, from \eqref{derivativeg2},  $M\subset N$. 
By reducing the neighborhood if necessary, we can conclude that $M=N$. 
\end{proof}




\begin{lemma}
For any $(s,x)$, 
\begin{equation}\label{calculadelta}
\left[ \frac{\partial}{\partial s}x(s,t), \frac{\partial}{\partial s}c(s,t) \right]=-(1+t_s^2)\delta(s,t).
\end{equation} 
\end{lemma}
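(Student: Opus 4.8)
The plan is to compute both chain-rule derivatives explicitly and then exploit only two structural facts: the determinant $[\cdot,\cdot]$ is bilinear and antisymmetric, and the holomorphicity relations $x_s = c_t$, $x_t = -c_s$ recorded at the start of Section~2. No information about $t_s$ beyond its mere existence is needed, so the implicit definition of $t(s,x)$ plays no active role here.

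First I would write, by the chain rule,
\begin{equation*}
\frac{\partial}{\partial s}x(s,t) = x_s + x_t\, t_s, \qquad \frac{\partial}{\partial s}c(s,t) = c_s + c_t\, t_s,
\end{equation*}
and then substitute $c_s = -x_t$ and $c_t = x_s$ into the second expression, obtaining $\frac{\partial}{\partial s}c(s,t) = -x_t + x_s\, t_s$. At this stage both vectors are expressed purely in terms of $x_s$ and $x_t$.

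Next I would expand the determinant by bilinearity:
\begin{equation*}
\left[ x_s + x_t t_s,\ -x_t + x_s t_s \right] = -[x_s,x_t] + t_s[x_s,x_s] - t_s[x_t,x_t] + t_s^2[x_t,x_s].
\end{equation*}
The two middle terms vanish since $[u,u]=0$, and antisymmetry gives $[x_t,x_s] = -[x_s,x_t]$, so the right-hand side collapses to $-(1+t_s^2)[x_s,x_t] = -(1+t_s^2)\delta(s,t)$, which is exactly the claim.

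There is essentially no obstacle in this argument; the only place to be careful is the bookkeeping of signs coming from $c_s=-x_t$ and from the antisymmetry of $[\cdot,\cdot]$, which together conspire to produce the single factor $-(1+t_s^2)$ rather than, say, $-(1-t_s^2)$. It may be worth remarking afterwards that the identity holds pointwise for every $(s,x)$ in the neighborhood, independently of whether $(s,x)$ lies in the singular set $M$ of Proposition~\ref{prop:Gs}.
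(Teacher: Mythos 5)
Your proof is correct and follows exactly the paper's route: compute $\frac{\partial}{\partial s}x = x_s + t_s x_t$ and $\frac{\partial}{\partial s}c = c_s + t_s c_t = -x_t + t_s x_s$ via the Cauchy--Riemann-type relations $x_s = c_t$, $x_t = -c_s$, then expand the determinant by bilinearity and antisymmetry. The only difference is that you write out the expansion the paper dismisses with ``follows easily,'' and your sign bookkeeping is right.
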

\begin{proof}
We have that $ \frac{\partial}{\partial s}x(s,t)=x_s+t_s x_t$ and $\frac{\partial}{\partial s}c(s,t)=c_s+t_s c_t=-x_t+t_s x_s$. Now 
\eqref{calculadelta} follows easily. 
\end{proof}

Differentiating \eqref{derivativet} we obtain
\begin{equation}\label{derivative2t}
-\left[ \frac{\partial^2}{\partial s^2}x(s,t), c(s,t) \right] -2\left[ \frac{\partial}{\partial s}x(s,t), \frac{\partial}{\partial s}c(s,t)\right]+\left[ x-x(s,t), \frac{\partial^2}{\partial s^2}c(s,t)    \right]=0.
\end{equation}
If $x(s,t)=x$, then
\begin{equation}\label{derivative2t0}
\left[ \frac{\partial^2}{\partial s^2}x(s,t), c(s,t)\right] =2(1+t_s^2)\delta(s,t). 
\end{equation}

\begin{proposition}\label{prop:GsGss}
$G_s=G_{ss}=0$ if and only if $\delta(s,t)=0$. 
\end{proposition}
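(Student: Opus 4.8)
The plan is to work on the surface $M=N=\{(s,x):G_s(s,x)=0\}=\{(s,x):x=x(s,t(s,x))\}$ established in Proposition \ref{prop:Gs}, and to compute $G_{ss}$ restricted to that surface. Concretely, I would differentiate the identity \eqref{derivativeg} (rather than \eqref{derivativeg2}) a second time in $s$: since $G_s(s,x)=\left[\tfrac{\partial}{\partial s}x(s,t),c(s,t)\right]$ as a function of $(s,t(s,x))$, differentiating with respect to $s$ produces $G_{ss}(s,x)=\left[\tfrac{\partial^2}{\partial s^2}x(s,t),c(s,t)\right]+\left[\tfrac{\partial}{\partial s}x(s,t),\tfrac{\partial}{\partial s}c(s,t)\right]$, where the $t$-derivatives must be expanded using $t=t(s,x)$ so that $\tfrac{\partial}{\partial s}$ here is the total derivative in $s$ with $x$ held fixed. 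The first bracket is handled by \eqref{derivative2t0} and the second by \eqref{calculadelta}: assuming we are at a point where $x=x(s,t)$ (forced by $G_s=0$ via Proposition \ref{prop:Gs}), these give $\left[\tfrac{\partial^2}{\partial s^2}x,c\right]=2(1+t_s^2)\delta$ and $\left[\tfrac{\partial}{\partial s}x,\tfrac{\partial}{\partial s}c\right]=-(1+t_s^2)\delta$.

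Adding these, on the set $\{G_s=0\}$ one gets $G_{ss}(s,x)=2(1+t_s^2)\delta(s,t)-(1+t_s^2)\delta(s,t)=(1+t_s^2)\delta(s,t)$. Since $1+t_s^2\geq 1>0$ everywhere, this shows that at a point with $G_s=0$ we have $G_{ss}=0$ if and only if $\delta(s,t)=0$, which is exactly the claim. The only care needed is to keep straight that the two second-$s$-derivative identities \eqref{derivative2t0} and \eqref{calculadelta} are written in terms of the same total $s$-derivative along $t=t(s,x)$, so that the coefficients $(1+t_s^2)$ match and combine cleanly; this is consistent with how \eqref{derivativeg} was derived (the $g_t t_s$ term is absorbed into $x_s+x_t t_s=\tfrac{\partial}{\partial s}x(s,t)$).

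The main obstacle, and it is a mild one, is bookkeeping: writing $G_{ss}$ correctly requires differentiating \eqref{derivativeg} as a composition $s\mapsto g_s(s,t(s,x))+g_t(s,t(s,x))t_s(s,x)$, or equivalently $s\mapsto \left[\tfrac{\partial}{\partial s}x(s,t),c(s,t)\right]\big|_{t=t(s,x)}$, and one must verify that the cross term and the term from differentiating $c$ assemble into precisely the left-hand sides of \eqref{derivative2t0} and \eqref{calculadelta}. An alternative route, perhaps cleaner, is to differentiate \eqref{derivativeg2} instead: $G_{ss}=\tfrac{d}{ds}\left[x-x(s,t),\tfrac{\partial}{\partial s}c(s,t)\right]=-\left[\tfrac{\partial}{\partial s}x(s,t),\tfrac{\partial}{\partial s}c(s,t)\right]+\left[x-x(s,t),\tfrac{\partial^2}{\partial s^2}c(s,t)\right]$; on $\{G_s=0\}$ the last term vanishes because $x=x(s,t)$, leaving $G_{ss}=-\left[\tfrac{\partial}{\partial s}x,\tfrac{\partial}{\partial s}c\right]=(1+t_s^2)\delta(s,t)$ directly from \eqref{calculadelta}. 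Either way the conclusion is immediate once the first derivative of $\delta$ does not enter, and I would present the second route since it uses \eqref{calculadelta} with no appeal to \eqref{derivative2t0} at all.
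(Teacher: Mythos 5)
Your first computation is exactly the paper's proof: differentiate \eqref{derivativeg} to get \eqref{derivative2g}, then combine \eqref{derivative2t0} and \eqref{calculadelta} at $x=x(s,t)$ to obtain $G_{ss}=(1+t_s^2)\delta(s,t)$, whence the claim since $1+t_s^2>0$. Your alternative route via differentiating \eqref{derivativeg2}, where the term $\left[x-x(s,t),\tfrac{\partial^2}{\partial s^2}c\right]$ drops out on $\{G_s=0\}$ and \eqref{calculadelta} alone finishes, is also correct and is a mild streamlining of the same argument.
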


\begin{proof}
Differentiating \eqref{derivativeg} we obtain
\begin{equation}\label{derivative2g}
G_{ss}=\left[ \frac{\partial^2}{\partial s^2}x(s,t), c(s,t)\right]+\left[ \frac{\partial}{\partial s}x(s,t), \frac{\partial}{\partial s}c(s,t) \right].
\end{equation}
At $x(s,t)=x$, using \eqref{calculadelta} and \eqref{derivative2t0}, we obtain  
$$
G_{ss}=(t_s^2+1)\delta(s,t). 
$$
Thus $G_{ss}=0$ if and only if $\delta(s,t)=0$.
\end{proof}

We conclude from propositions \ref{prop:Gs} and \ref{prop:GsGss} that theorem \ref{thm:familyG}(1) holds. 

\subsection{Bifurcation set of the generating family}

\begin{lemma}
At $x=x(s,t)$, $\delta(s,t)=0$, 
\begin{equation}\label{xszero}
\frac{\partial}{\partial s}x(s,t)=x_s+x_tt_s=0.
\end{equation} 
In other words, $\alpha(s)=t_s$, where $\alpha(s)$ is defined by \eqref{definealpha}. 
\end{lemma}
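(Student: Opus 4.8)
The plan is to combine three facts that are already available: the formula \eqref{derivativeg} for $G_s$, the description of the singular set of $G$ in Proposition \ref{prop:Gs}, and the nondegeneracy hypothesis \eqref{hypothesis1}. The underlying geometric picture is that at such a point the vector $\frac{\partial}{\partial s}x(s,t)=x_s+t_sx_t$ is forced to lie simultaneously in two transverse lines of $\R^2$, hence must be zero.

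First I would use the singularity condition: since $\delta(s,t)=[x_s,x_t]=0$ and $x_t(s,t)\neq 0$ throughout a neighbourhood of $(s_0,t_0)$, the vector $x_s$ is a scalar multiple of $x_t$, so $\frac{\partial}{\partial s}x(s,t)=x_s+t_sx_t$ lies in the line $\R\,x_t(s,t)$. Next I would use the hypothesis $x=x(s,t)$: this says exactly that $(s,x)$ belongs to the surface $M$ of Proposition \ref{prop:Gs}, so $G_s(s,x)=0$, and then \eqref{derivativeg} reads $\left[\frac{\partial}{\partial s}x(s,t),c(s,t)\right]=0$, i.e. $\frac{\partial}{\partial s}x(s,t)$ lies in the line $\R\,c(s,t)$. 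By \eqref{hypothesis1} we have $[x_t(s,t),c(s,t)]\neq 0$ near $(s_0,t_0)$, so these two lines are genuine and transverse, meeting only at the origin; therefore $\frac{\partial}{\partial s}x(s,t)=x_s+t_sx_t=0$, which is \eqref{xszero}. Comparing this with the defining relation \eqref{definealpha}, $x_s+\alpha(s)x_t=0$, and using $x_t\neq 0$ to note that the scalar making $x_s+(\cdot)\,x_t$ vanish is unique, we obtain $\alpha(s)=t_s$.

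The step I would be most careful about is not a computation — \eqref{derivativeg} is already proved — but the bookkeeping: one must make sure $[x_t,c]\neq 0$ and $x_t\neq 0$ may be invoked not only at $(s_0,t_0)$ but at the nearby singular point $(s,t)$, which is legitimate after shrinking $I\times J\times U$, by continuity. When matching $\alpha(s)$ with $t_s$ one should also observe that on $S$, locally the graph $t=\tau(s)$, the implicitly defined value $t(s,x(s,\tau(s)))$ coincides with $\tau(s)$, so the two occurrences of the $t$-coordinate refer to the same point of $S$.
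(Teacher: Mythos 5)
Your proposal is correct and follows essentially the same route as the paper: both arguments reduce to observing that at $x=x(s,t)$ the identity \eqref{derivativet} (equivalently, $G_s=0$ combined with \eqref{derivativeg}) forces $\left[x_s+t_sx_t,\,c\right]=0$, and then the singularity condition $x_s=-\alpha x_t$ together with $[x_t,c]\neq 0$ yields $\alpha=t_s$. Your ``two transverse lines'' phrasing is just a geometric repackaging of the paper's substitution-and-factor step, and your closing remarks on where the hypotheses are invoked are sound.
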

\begin{proof}
At $x=x(s,t)$, $\delta(s,t)=0$ we have $x_s+\alpha x_t=0$. Using now \eqref{derivativet} we obtain that
$$
\left[-\alpha x_t+t_s x_t, c \right]=0.
$$
Thus $\alpha=t_s$, proving the lemma. 
\end{proof}

\begin{proposition}\label{versalA2}
The family $G(s,x)$ is versal at an $A_2$ point. 
\end{proposition}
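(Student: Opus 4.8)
The plan is to verify versality directly from the standard criterion for unfoldings of $A_2$ singularities: writing $x=(x_1,x_2)$ so that $G(s,x_1,x_2)$ is a two-parameter unfolding of the function $s\mapsto G(s,x_1^0,x_2^0)$ with an $A_2$ (ordinary cusp) singularity at $s=s_0$, the unfolding is versal if and only if the $1$-jets in $s$ at $s_0$ of the two partial derivatives $\partial G/\partial x_1$ and $\partial G/\partial x_2$, together with the constants, span the $2$-dimensional quotient $\mathfrak{m}_s/\langle s^2\rangle$ (equivalently, the vectors $(\partial_{x_i}G,\ \partial_s\partial_{x_i}G)$ evaluated at the point are linearly independent in $\R^2$, modulo the constants coming from $1$ and $\partial_s^2 G$). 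So the first step is to compute $\partial G/\partial x_i$ and its $s$-derivative at an $A_2$ point.

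For the first derivatives, I would differentiate $G(s,x)=g(s,t(s,x))$ with respect to $x$, using that $g$ depends on $x$ only through $t$. From $g_t=[x_t,c]$ and the defining relation \eqref{definet}, differentiating $[x-x(s,t),c(s,t)]=0$ with respect to $x_i$ gives $e_i\cdot(\text{something})$; concretely $\partial t/\partial x_i = [e_i,c]/\bigl([x_t,c]-[x-x(s,t),c_t]\bigr)$, and at $x=x(s,t)$ this denominator is just $-[x_t,c]$ (nonzero by hypothesis \eqref{hypothesis1}). Hence $\partial G/\partial x_i = g_t\cdot\partial t/\partial x_i = -[e_i,c]$ at points of $M$, i.e. the gradient $\nabla_x G = (C,D)$ up to sign — a clean formula. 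This is exactly the "recovering $(C,D)$ from the generating family" fact one expects. Then I differentiate $\partial G/\partial x_i$ with respect to $s$ along $M$: since on $M$ we have $x=x(s,t(s,x))$, moving in $s$ while holding $x$ fixed is moving along the curve $\frac{\partial}{\partial s}x(s,t)$, which at an $A_2$ point is nonzero (here $\alpha(s)\neq\tau_s$, so by the lemma preceding Proposition \ref{prop:GsGss}'s analogue the vector $x_s+x_t t_s$ does \emph{not} vanish), and I get $\partial_s(\partial_{x_i}G) = -[e_i, \frac{\partial}{\partial s}c(s,t)] = -[e_i,\,c_s+t_s c_t] = -[e_i,\,-x_t+t_s x_s]$.

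The versality condition then becomes: the two vectors $\bigl(-[e_i,c],\ -[e_i,-x_t+t_s x_s]\bigr)\in\R^2$, $i=1,2$, are linearly independent modulo the span of $(1,0)$ and $(0,\partial_s^2 G)$. Since the first coordinates $-[e_1,c],-[e_2,c]$ are just the components $(C,D)$ — not both zero — and the "constant direction" $(1,0)$ from $1\in\mathfrak{m}_s^0$ lets us kill one first coordinate, what must be checked is that the remaining second-coordinate data is nonzero, i.e. that $[x_t,\,-x_t+t_s x_s] = t_s[x_t,x_s] = -t_s\,\delta(s,t)$ is... no: at an $A_2$ point $\delta=0$, so I must instead exhibit linear independence using that the two $\R^2$-vectors together with $(1,0)$ span $\R^2$, which holds precisely when the $2\times 2$ determinant $\bigl[ (C,D),\ (-[e_1,-x_t+t_sx_s],\,-[e_2,-x_t+t_sx_s])\bigr]$ is nonzero. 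That determinant works out to a multiple of $[c,\,-x_t+t_s x_s] = [c,\,\frac{\partial}{\partial s}c(s,t)]$, and by \eqref{hypothesis1} together with $\delta=0$ (which forces $\frac{\partial}{\partial s}x = 0$, hence $\frac{\partial}{\partial s}c = -x_t + t_s x_s$ is a nonzero multiple of $x_t$ transverse to $c$ since $[x_t,c]\neq 0$) this is nonzero.

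The main obstacle I anticipate is bookkeeping with the right versality criterion and keeping the role of the "constant" term $1$ straight: because $G$ has $\partial G/\partial s = 0$ on all of $M$, not just at the point, one has $G_s\equiv 0$ along the parameter surface, and one must be careful that versality of this particular degenerate-looking unfolding is still governed by the first-order $x$-derivatives. Concretely the cleanest route is: (i) reparametrise so the $A_2$ point is at the origin with $G(s,0,0)=\tfrac{\lambda}{6}s^3+O(s^4)$, $\lambda\neq0$ (this uses Proposition \ref{prop:GsGss}'s analogue giving $G_{ss}=(1+t_s^2)\delta$ and $\partial_s^3 G$ proportional to $\partial_s\delta\neq0$); (ii) compute the $1$-jets of $\partial_{x_1}G,\partial_{x_2}G$ in $s$ as above; (iii) check these two $1$-jets plus the constant $1$ span $\mathfrak m/\mathfrak m^3 \ominus \langle s^3\rangle$ — equivalently the $2\times2$ determinant $[c,\partial_s c]\neq0$ — and conclude by the versal unfolding theorem. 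I would relegate the determinant identity $[c,\partial_s c]\neq 0$ to the computation already implicit in \eqref{hypothesis1}.
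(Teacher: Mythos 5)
Your proposal follows essentially the same route as the paper: compute $G_x=(C,D)$ (up to sign) by implicit differentiation of \eqref{definet}, show that $(G_x)_s$ is controlled by $\frac{\partial}{\partial s}c=c_s+c_tt_s=-(1+t_s^2)x_t\neq 0$ via \eqref{xszero} and hypothesis \eqref{hypothesis1}, and invoke the Bruce--Giblin criterion; your closing determinant condition $[c,\frac{\partial}{\partial s}c]\neq 0$ is a sufficient (not equivalent, despite your ``precisely when'') strengthening of the paper's condition $(G_x)_s\neq 0$, and you do verify it. One internal slip worth fixing: in your second paragraph you assert that $x_s+x_tt_s$ is \emph{nonzero} at an $A_2$ point, which contradicts \eqref{xszero} (it vanishes at every singular point on $M$, regardless of whether $\alpha\neq\tau_s$) as well as your own correct use of that vanishing in the final paragraph; fortunately nothing in the argument depends on the erroneous claim.
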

\begin{proof}
We first calculate $(G_{x_1},G_{x_2})=g_t(t_{x_1},t_{x_2})$. Differentiating \eqref{definet} we obtain
\[
\left\{
\begin{array}{c}
(1-(x_1)_t t_{x_1})C=(x_2)_t t_{x_1}D\\
(x_1)_t t_{x_2} C= (1-(x_2)_t t_{x_2})D
\end{array}
\right.
\]
and thus $(t_{x_1},t_{x_2})=\frac{1}{[x_t,c]}(C,D)$. We conclude that $G_x=(C,D)$. 

Now, using \eqref{xszero}, we conclude that
$$
\frac{\partial}{\partial s}c(s,t(s,x))=c_s+c_t t_s=-x_t+x_s t_s=-(1+t_s^2)x_t\neq 0,
$$
which proves the versality of $G$ at an $A_2$ point (see \cite{Giblin92}, page 149).  
\end{proof}

Differentiating \eqref{derivative2t} we obtain
\begin{equation}\label{derivative3t}
-\left[ \frac{\partial^3}{\partial s^3}x, c \right] -3\left[ \frac{\partial^2}{\partial s^2}x, \frac{\partial}{\partial s}c\right]-3\left[ \frac{\partial}{\partial s}x, \frac{\partial^2}{\partial s^2}c\right]+\left[ x-x(s,t), \frac{\partial^3}{\partial s^3}c   \right]=0.
\end{equation}

But differentiating \eqref{calculadelta} and taking $x=x(s,t)$, $\delta(s,t)=0$ we obtain
\begin{equation}\label{calcula2delta}
\left[ \frac{\partial^2}{\partial s^2}x(s,t),  \frac{\partial}{\partial s}c(s,t) \right]=-(t_s^2+1)\frac{\partial}{\partial s}\delta(s,t),
\end{equation}
We conclude that
\begin{equation}\label{derivative3t0}
\left[ \frac{\partial^3}{\partial s^3}x(s,t), c(s,t) \right] =3(t_s^2+1)\frac{\partial}{\partial s}\delta(s,t).
\end{equation}

\begin{proposition}\label{GsGssGsss}
$G_s=0,\ G_{ss}=0,\ G_{sss}=0$ if and only if $x(s,t)=x$, $\delta(s,t)=0$ and $\frac{\partial}{\partial s}\delta(s,t)=0$.
\end{proposition}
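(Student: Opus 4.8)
The plan is to reproduce, one order higher, the argument already used for Propositions \ref{prop:Gs} and \ref{prop:GsGss}: first restrict to the locus those propositions have already identified, then differentiate the formula for $G_{ss}$ once more and read off the conclusion from the identities collected above. To begin, I would note that the hypotheses $G_s=G_{ss}=G_{sss}=0$ contain $G_s=G_{ss}=0$, so by Proposition \ref{prop:GsGss} they already force $x=x(s,t)$ and $\delta(s,t)=0$; conversely, $x=x(s,t)$ together with $\delta(s,t)=0$ already yields $G_s=G_{ss}=0$ by Propositions \ref{prop:Gs} and \ref{prop:GsGss}. Hence the whole statement reduces to showing that, on the locus $\{x=x(s,t),\ \delta(s,t)=0\}$, one has $G_{sss}=0$ if and only if $\frac{\partial}{\partial s}\delta(s,t)=0$.

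For that, I would differentiate the expression \eqref{derivative2g} for $G_{ss}$ with respect to $s$ (as a function of $(s,x)$), obtaining
\[
G_{sss}=\left[\frac{\partial^3}{\partial s^3}x,c\right]+2\left[\frac{\partial^2}{\partial s^2}x,\frac{\partial}{\partial s}c\right]+\left[\frac{\partial}{\partial s}x,\frac{\partial^2}{\partial s^2}c\right],
\]
and only then evaluate at a point with $x=x(s,t)$ and $\delta(s,t)=0$. There the last bracket vanishes because $\frac{\partial}{\partial s}x(s,t)=0$ by \eqref{xszero}; for the first bracket I would substitute \eqref{derivative3t0} and for the middle one \eqref{calcula2delta}, giving
\[
G_{sss}=3(t_s^2+1)\frac{\partial}{\partial s}\delta(s,t)-2(t_s^2+1)\frac{\partial}{\partial s}\delta(s,t)=(t_s^2+1)\,\frac{\partial}{\partial s}\delta(s,t).
\]
Since $t_s^2+1\ge 1>0$, this quantity vanishes exactly when $\frac{\partial}{\partial s}\delta(s,t)=0$, which is the required equivalence, and combining with the reduction in the previous paragraph finishes the proof.

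I do not anticipate a genuine obstacle here: the two essential identities \eqref{derivative3t0} and \eqref{calcula2delta} have already been established above by repeated differentiation of the implicit relation \eqref{definet}, and all that remains is one more application of the Leibniz rule. The only point that calls for care is procedural — one must differentiate the general-$(s,x)$ formula \eqref{derivative2g} and restrict to the singular locus \emph{afterwards}, exactly as was done in passing from \eqref{derivative2t} to \eqref{derivative2t0} and from \eqref{derivative3t} to \eqref{derivative3t0}; restricting before differentiating would silently drop terms involving $\frac{\partial}{\partial s}\delta$.
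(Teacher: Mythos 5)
Your proposal is correct and follows essentially the same route as the paper: differentiate \eqref{derivative2g} to get \eqref{derivative3g}, then evaluate on the locus $x=x(s,t)$, $\delta=0$ using \eqref{xszero}, \eqref{calcula2delta} and \eqref{derivative3t0} to obtain $G_{sss}=(t_s^2+1)\frac{\partial}{\partial s}\delta(s,t)$. The only difference is that you make explicit the reduction to the previously established propositions and the "differentiate first, restrict afterwards" caveat, both of which the paper leaves implicit.
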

\begin{proof}
Differentiating \eqref{derivative2g} we obtain 
\begin{equation}\label{derivative3g}
G_{sss}=\left[ \frac{\partial^3}{\partial s^3}x(s,t), c(s,t)\right]+2\left[ \frac{\partial^2}{\partial s^2}x(s,t), \frac{\partial}{\partial s}c(s,t) \right]+\left[ \frac{\partial}{\partial s}x(s,t), \frac{\partial^2}{\partial s^2}c(s,t) \right].
\end{equation}
At $\delta=0$ we have
$$
G_{sss}= (t_s^2+1)\frac{\partial}{\partial s}\delta(s,t).
$$
Thus $G_{sss}=0$ if and only if $\frac{\partial}{\partial s}\delta=\delta_s+\delta_tt_s=0$. 
\end{proof}

 We conclude from propositions \ref{prop:GsGss}, \ref{versalA2} and \ref{GsGssGsss} that theorem \ref{thm:familyG}(2) holds. We remark that 
 $\frac{\partial}{\partial s}\delta=0$ is equivalent to $\alpha(s)=\tau_s(s)$, where $\alpha$ and $\tau$ were defined before theorem \ref{thm:singularplanar}. Hence these propositions
 also gives another proof of  theorem \ref{thm:singularplanar}(1). 
 
 \subsection{  $A_3$ points of the generating family}

\begin{proposition}\label{versalA3}
The family $G(s,x)$ is versal at an $A_3$ point.
\end{proposition}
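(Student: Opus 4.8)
The plan is to show versality of $G$ at an $A_3$ point exactly as in the proof of Proposition \ref{versalA2}, namely by producing the vector of first-order derivatives in the unfolding parameters and checking the standard transversality condition for an $A_3$ singularity (see \cite{Giblin92}, page 149). Concretely, for an $A_3$ point one needs the $2\times 2$ matrix whose rows are $\frac{\partial}{\partial s}\bigl(G_{x_1},G_{x_2}\bigr)$ and $\frac{\partial^2}{\partial s^2}\bigl(G_{x_1},G_{x_2}\bigr)$, evaluated at the point, to be nonsingular; equivalently the three vectors $G_x$, $\partial_s G_x$, $\partial_{ss} G_x$ must span the relevant jet space transversally to the orbit.

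First I would recall from the proof of Proposition \ref{versalA2} that $G_x=(C,D)=c^{\perp}$-type data; more precisely it was shown there that $G_x=(C,D)$, i.e. $G_x$ is (a relabeling of) the conjugate map $c$. Hence $\frac{\partial}{\partial s}G_x=\frac{\partial}{\partial s}c(s,t(s,x))=c_s+c_t t_s$, and at an $A_3$ point \eqref{xszero} gives $\frac{\partial}{\partial s}c(s,t)=-x_t+x_s t_s=-(1+t_s^2)x_t$. Differentiating once more, $\frac{\partial^2}{\partial s^2}G_x=\frac{\partial^2}{\partial s^2}c(s,t)$, which I would expand using $c_s=-x_t$, $c_t=x_s$ and the relation $\frac{\partial^2}{\partial s^2}x(s,t)=x_{ss}+2t_sx_{st}+t_s^2x_{tt}+t_{ss}x_t$; since $\frac{\partial}{\partial s}x=0$ at the point, the term $t_{ss}x_t$ combines with the others so that $\frac{\partial^2}{\partial s^2}c(s,t)$ has a controllable component along $x_s$ (equivalently along $x_t$, as $x_s$ and $x_t$ are parallel on $S$).

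The versality condition then reduces to the nonvanishing of a single determinant built from $x_t$, its $s$-derivative along $S$, and the coefficient of the $\frac{\partial^2}{\partial s^2}\delta$ term; using \eqref{calculadelta}, \eqref{calcula2delta} and the $A_3$ hypothesis $\frac{\partial^2}{\partial s^2}\delta(s,t)\neq 0$ (together with $\frac{\partial}{\partial s}\delta(s,t)=0$, which kills the lower-order obstruction), I expect this determinant to be a nonzero multiple of $\frac{\partial^2}{\partial s^2}\delta(s,t)\cdot[x_t,c]$, which is nonzero by \eqref{hypothesis1} and the $A_3$ condition. The main obstacle is bookkeeping: one must carry the chain-rule expansions of $\frac{\partial^2}{\partial s^2}c$ far enough to see that, modulo vectors already in the span of $G_x$ and $\partial_s G_x$, the third vector $\partial_{ss}G_x$ picks up precisely a $\frac{\partial^2}{\partial s^2}\delta$-multiple of a vector transverse to $x_t$; once the earlier identities \eqref{calculadelta}–\eqref{calcula2delta} are invoked this is a short computation, and no genuinely new idea beyond the $A_2$ argument is needed.
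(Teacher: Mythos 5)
Your overall strategy is the paper's: starting from $G_x=(C,D)$ and $(G_x)_s=-(1+t_s^2)x_t$, compute $(G_x)_{ss}$ and check the $2\times 2$ determinant condition $\left[(G_x)_s,(G_x)_{ss}\right]\neq 0$. The problem is your guess for what that determinant turns out to be. Carrying the computation through: away from the singular point one has $(G_x)_s=\frac{\partial}{\partial s}c=-x_t+t_sx_s$ identically (up to the fixed rotation taking $(-D,C)$ to $(C,D)$, which does not affect determinants), so
$$
(G_x)_{ss}=-\left(x_{st}+t_sx_{tt}\right)+t_{ss}x_s+t_s\left(x_{ss}+t_sx_{st}\right).
$$
Bracketing against $(G_x)_s=-(1+t_s^2)x_t$ kills every term parallel to $x_t$; in particular $t_{ss}x_s=-t_st_{ss}x_t$ drops by \eqref{xszero}, and \eqref{xsszero} gives $x_{ss}+t_sx_{st}=-t_s(x_{st}+t_sx_{tt})-t_{ss}x_t$. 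Hence
$$
\left[(G_x)_s,(G_x)_{ss}\right]=(1+t_s^2)^2\left[x_t,\,x_{st}+t_sx_{tt}\right]=-(1+t_s^2)^2\,\delta_t,
$$
using $x_s=-t_sx_t$ in $\delta_t=[x_{st},x_t]+[x_s,x_{tt}]$. So the obstruction is $\delta_t\neq 0$, which is the standing chart assumption made at the beginning of Section~4 (regularity of $S$ parameterized by $s$), and \emph{not} a multiple of $\frac{\partial^2}{\partial s^2}\delta\cdot[x_t,c]$ as you predict.

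This is a genuine gap rather than a bookkeeping issue: the hypotheses you invoke to conclude, namely \eqref{hypothesis1} and the $A_3$ condition $\frac{\partial^2}{\partial s^2}\delta\neq 0$, do not control $\delta_t$, so your argument as written does not close. The quantity $\frac{\partial^2}{\partial s^2}\delta$ is a second-order condition along $S$ that certifies the point is $A_3$ in the first place (Proposition~\ref{GsGssGsssGssss}); it plays no role in the versality check, whereas the first-order regularity $\delta_t\neq 0$ of the singular curve is exactly what is needed. Your $A_2$ step is fine, but for $A_3$ you must actually identify $\left[(G_x)_s,(G_x)_{ss}\right]$ with $-\,(1+t_s^2)^2\delta_t$ and cite the assumption $\delta_t\neq 0$.
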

\begin{proof}
In proposition \ref{versalA2}, we have calculated $G_x=(C,D)$ and $(G_x)_s=-(1+t_s^2)x_t$. In order to prove the versality of $G(s,x)$ at an $A_3$ point we must calculate
$(G_x)_{ss}$ and check that $[(G_x)_s,(G_x)_{ss}]\neq 0$ (see \cite{Giblin92}, page 149). This is equivalent to proving that 
$$
\left[ x_t, \frac{\partial}{\partial s}x_t \right]=\left[ x_t,x_{st}+t_sx_{tt} \right] \neq 0.
$$ 
But we have assumed that $\delta_t\neq 0$. Using \eqref{xszero} we have
$$
\delta_t=\left[ x_{st},x_t \right]+ \left[ x_s,x_{tt} \right]=- \left[ x_t, x_{st}+t_sx_{tt} \right],
$$
thus completing the proof of the proposition.
\end{proof}

\begin{lemma} 
If $x=x(s,t)$ and $\delta=\frac{\partial}{\partial s}\delta=0$, then
\begin{equation}\label{xsszero}
\frac{\partial^2}{\partial s^2}x(s,t)=0. 
\end{equation}
This is equivalent to saying that $t_{ss}=\alpha'(s)$. 
\end{lemma}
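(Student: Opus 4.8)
The claim is that at a point where $x = x(s,t)$, $\delta(s,t) = 0$ and $\frac{\partial}{\partial s}\delta(s,t) = 0$, one has $\frac{\partial^2}{\partial s^2}x(s,t) = 0$. My plan is to exploit the same mechanism that produced the first-order vanishing \eqref{xszero}: differentiate the defining relation \eqref{definet} twice in $s$, specialize to the stratum in question, and read off the conclusion from a nondegeneracy argument. Concretely, I would start from \eqref{derivative2t}, which expresses the vanishing of
$$
-\left[ \tfrac{\partial^2}{\partial s^2}x(s,t), c(s,t) \right] -2\left[ \tfrac{\partial}{\partial s}x(s,t), \tfrac{\partial}{\partial s}c(s,t)\right]+\left[ x-x(s,t), \tfrac{\partial^2}{\partial s^2}c(s,t) \right].
$$
At $x = x(s,t)$ the last bracket drops out, and by \eqref{xszero} the vector $\frac{\partial}{\partial s}x(s,t)$ vanishes on this stratum, so the middle term dies as well. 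Hence \eqref{derivative2t} collapses to $\left[ \frac{\partial^2}{\partial s^2}x(s,t), c(s,t) \right] = 0$, i.e. $\frac{\partial^2}{\partial s^2}x$ is parallel to $c$ (consistent with \eqref{derivative2t0} combined with $\delta = 0$).

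Next I would show the same vector is parallel to $x_t$. Write $\frac{\partial^2}{\partial s^2}x(s,t) = x_{ss} + 2 t_s x_{st} + t_s^2 x_{tt} + t_{ss} x_t$. Since $\frac{\partial}{\partial s}x(s,t) = x_s + t_s x_t \equiv 0$ along the singular curve parametrized by $s$, differentiating this identity once more in $s$ gives $\frac{\partial^2}{\partial s^2}x(s,t) = (x_s + t_s x_t)'_s$; but that derivative is $\big(\frac{\partial}{\partial s}x(s,t)\big)_s$ which, being the $s$-derivative of the identically zero vector field on the one-dimensional singular set, need not vanish off the curve — so instead I restrict attention to the curve itself where $s$ is the parameter, and use that the whole expression must then be a multiple of $x_t$ (the tangent to the preimage). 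More cleanly: from $\left[\frac{\partial^2}{\partial s^2}x, x_t\right]$, expand using $x_s = -\alpha x_t = -t_s x_t$ and differentiate $\delta = [x_s, x_t] = 0$ in $s$ along the curve; together with $\frac{\partial}{\partial s}\delta = 0$ this forces $\left[\frac{\partial^2}{\partial s^2}x, x_t\right] = 0$, exactly as in the computation of $\delta_t$ in Proposition~\ref{versalA3}.

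Having both $\left[\frac{\partial^2}{\partial s^2}x, c\right] = 0$ and $\left[\frac{\partial^2}{\partial s^2}x, x_t\right] = 0$, I conclude: by hypothesis \eqref{hypothesis1} the vectors $x_t$ and $c$ are linearly independent at the point, so a vector orthogonal (in the determinant-pairing sense) to both must be zero. Hence $\frac{\partial^2}{\partial s^2}x(s,t) = 0$. Finally, the translation into $t_{ss} = \alpha'(s)$ is immediate: differentiating \eqref{definealpha}, i.e. $x_s(s,\tau(s)) + \alpha(s) x_t(s,\tau(s)) = 0$, along the singular curve and comparing with the expansion of $\frac{\partial^2}{\partial s^2}x(s,t)$ in the $\{x_t, \text{curvature terms}\}$ basis shows the $x_t$-coefficient of $\frac{\partial^2}{\partial s^2}x$ is exactly $t_{ss} - \alpha'(s)$ (using $t_s = \alpha$ from \eqref{xszero}), so vanishing of the former is equivalent to $t_{ss} = \alpha'(s)$.

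The main obstacle I anticipate is purely bookkeeping: keeping straight the two meanings of $\frac{\partial}{\partial s}$ — differentiation of $x(s,t(s,x))$ at fixed $x$ versus differentiation along the singular curve $s \mapsto (s,\tau(s))$ — and making sure the identity $\frac{\partial}{\partial s}x(s,t) \equiv 0$ is used only where it genuinely holds (namely on the stratum $M$, which by Proposition~\ref{prop:Gs} is where $x = x(s,t)$, and where additionally $\delta = 0$). Once that is handled carefully, every step reduces to the algebraic identities \eqref{derivative2t}, \eqref{calculadelta}, \eqref{derivative2t0} already established, plus the independence of $x_t$ and $c$ from \eqref{hypothesis1}; there is no new estimate or transversality input required.
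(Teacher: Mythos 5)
Your proposal is correct and follows essentially the same route as the paper: both extract $\left[ \frac{\partial^2}{\partial s^2}x, c \right]=0$ from the twice-differentiated defining relation (equivalently \eqref{derivative2t0} with $\delta=0$) and $\left[ \frac{\partial^2}{\partial s^2}x, x_t \right]=0$ from $\frac{\partial}{\partial s}\delta=0$, then conclude using $[x_t,c]\neq 0$. The paper packages the last step as ``the vector equals $\beta x_t$, and pairing with $c$ forces $\beta=-t_{ss}$'' rather than your linear-independence phrasing, but the content is identical.
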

\begin{proof}
Since $\delta=\left[ x_s, x_t \right]$, we have 
$$
\frac{\partial}{\partial s}\delta=\left[ x_{ss}+x_{st}t_s, x_t \right] + \left[ x_s, x_{st}+x_{tt}t_s \right].
$$
At $x=x(s,t)$, $\delta=0$,  \eqref{xszero} implies that
$$
\frac{\partial}{\partial s}\delta=\left[ x_{ss}+2x_{st}t_s+x_{tt}t_s^2, x_t \right].
$$
Thus, if $x=x(s,t)$, $\delta=\frac{\partial}{\partial s}\delta=0$, 
$x_{ss}+2x_{st}t_s+x_{tt}t_s^2=\beta x_t$, for some $\beta\in\R$. On the other hand, by \eqref{derivative2t0} we obtain
$$
\left[ (\beta+t_{ss})x_t, c \right]=0,
$$
which implies that in fact $\beta=-t_{ss}$, thus proving the lemma.
\end{proof}

Differentiating \eqref{derivative3t} and taking $x=x(s,t)$, $\delta=\frac{\partial}{\partial s}\delta=0$ we obtain 
\begin{equation*}\label{derivative4t}
\left[ \frac{\partial^4}{\partial s^4}x(s,t), c(s,t) \right] +4\left[ \frac{\partial^3}{\partial s^3}x(s,t),  \frac{\partial}{\partial s}c(s,t) \right]=0.
\end{equation*} 
But differentiating  \eqref{calcula2delta} and taking $x=x(s,t)$, $\delta=\frac{\partial}{\partial s}\delta=0$ we get
\begin{equation}\label{calcula3delta}
\left[ \frac{\partial^3}{\partial s^3}x(s,t),  \frac{\partial}{\partial s}c(s,t) \right]=-(t_s^2+1)\frac{\partial^2}{\partial s^2}\delta(s,t).
\end{equation}
Thus
\begin{equation}\label{derivative4t0}
\left[ \frac{\partial^4}{\partial s^4}x(s,t), c(s,t) \right]=4(t_s^2+1)\frac{\partial^2}{\partial s^2}\delta(s,t).
\end{equation}

\begin{proposition}\label{GsGssGsssGssss}
$G_s=0,\ G_{ss}=0,\ G_{sss}=0,\ G_{ssss}\neq 0$ if and only if $x(s,t)=x$, $\delta(s,t)=0$, $\frac{\partial}{\partial s}\delta(s,t)=0$ and $\frac{\partial^2}{\partial s^2}\delta(s,t)\neq0$.
This last condition is equivalent to $\tau_{ss}\neq t_{ss}=\alpha'(s)$. 
\end{proposition}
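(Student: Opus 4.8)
The plan is to continue the chain of computations from Propositions \ref{prop:GsGss} and \ref{GsGssGsss}, differentiating \eqref{derivative3g} once more with respect to $s$ and evaluating on the $A_3$ stratum. By Proposition \ref{GsGssGsss}, the hypotheses $G_s=G_{ss}=G_{sss}=0$ are exactly equivalent to $x=x(s,t)$, $\delta(s,t)=0$ and $\frac{\partial}{\partial s}\delta(s,t)=0$, so it only remains to compute $G_{ssss}$ under these conditions and show that it equals a nonzero multiple of $\frac{\partial^2}{\partial s^2}\delta(s,t)$.

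First I would differentiate \eqref{derivative3g} with respect to $s$, obtaining
\[
G_{ssss}=\left[\frac{\partial^4}{\partial s^4}x,c\right]+3\left[\frac{\partial^3}{\partial s^3}x,\frac{\partial}{\partial s}c\right]+3\left[\frac{\partial^2}{\partial s^2}x,\frac{\partial^2}{\partial s^2}c\right]+\left[\frac{\partial}{\partial s}x,\frac{\partial^3}{\partial s^3}c\right].
\]
Now evaluate at a point with $x=x(s,t)$, $\delta=\frac{\partial}{\partial s}\delta=0$. The last bracket vanishes by \eqref{xszero} (since $\frac{\partial}{\partial s}x(s,t)=0$) and the third bracket vanishes by \eqref{xsszero} (since $\frac{\partial^2}{\partial s^2}x(s,t)=0$). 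The first bracket equals $4(t_s^2+1)\frac{\partial^2}{\partial s^2}\delta(s,t)$ by \eqref{derivative4t0}, and the second equals $-(t_s^2+1)\frac{\partial^2}{\partial s^2}\delta(s,t)$ by \eqref{calcula3delta}. Hence
\[
G_{ssss}=(t_s^2+1)\,\frac{\partial^2}{\partial s^2}\delta(s,t),
\]
and since $t_s^2+1>0$ this gives $G_{ssss}\neq 0\iff\frac{\partial^2}{\partial s^2}\delta(s,t)\neq 0$, which combined with Proposition \ref{GsGssGsss} establishes the first assertion.

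For the reformulation in terms of $\tau$ and $\alpha$, I would expand $\frac{\partial^2}{\partial s^2}\delta(s,t)=\delta_{ss}+2\delta_{st}t_s+\delta_{tt}t_s^2+\delta_t t_{ss}$ and compare it with the second derivative of the identity $\delta(s,\tau(s))\equiv 0$ on $S$, namely $\delta_{ss}+2\delta_{st}\tau_s+\delta_{tt}\tau_s^2+\delta_t\tau_{ss}=0$. At the point in question $t_s=\alpha(s)=\tau_s(s)$ — the first equality by the lemma giving \eqref{xszero}, the second because $\frac{\partial}{\partial s}\delta=0$ (see the remark following Proposition \ref{GsGssGsss}) — and $t_{ss}=\alpha'(s)$ by the lemma giving \eqref{xsszero}. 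Subtracting the two relations then yields
\[
\frac{\partial^2}{\partial s^2}\delta(s,t)=\delta_t\bigl(t_{ss}-\tau_{ss}\bigr)=\delta_t\bigl(\alpha'(s)-\tau_{ss}\bigr),
\]
and since $\delta_t\neq 0$ by standing assumption, $\frac{\partial^2}{\partial s^2}\delta(s,t)\neq 0$ is equivalent to $\tau_{ss}\neq t_{ss}=\alpha'(s)$.

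The argument is essentially bookkeeping once \eqref{derivative4t0}, \eqref{calcula3delta}, \eqref{xszero} and \eqref{xsszero} are in hand; the only point requiring a little care is verifying that the two bracket terms carrying $\frac{\partial}{\partial s}x$ and $\frac{\partial^2}{\partial s^2}x$ genuinely drop out on the $A_3$ stratum, so that the coefficients $4$ and $-3$ combine into the single clean factor $(t_s^2+1)$. I do not expect any real obstacle: this is the natural fourth and final step of the inductive sequence of derivative computations carried out in this section, paralleling Proposition \ref{GsGssGsss} exactly.
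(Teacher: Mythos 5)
Your argument is correct and is essentially identical to the paper's proof: differentiate \eqref{derivative3g} to obtain \eqref{derivative4g}, then evaluate on the $A_3$ stratum using \eqref{xszero}, \eqref{xsszero}, \eqref{calcula3delta} and \eqref{derivative4t0} so that the coefficients $4$ and $-3$ combine to give $G_{ssss}=(t_s^2+1)\frac{\partial^2}{\partial s^2}\delta(s,t)$. Your additional verification that $\frac{\partial^2}{\partial s^2}\delta=\delta_t(\alpha'(s)-\tau_{ss})$, which the paper states without detail, is also correct and uses the standing assumption $\delta_t\neq 0$ appropriately.
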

\begin{proof}
Differentiating \eqref{derivative3g}  we obtain 
\begin{equation}\label{derivative4g}
G_{ssss}=\left[ \frac{\partial^4}{\partial s^4}x, c \right]+3\left[ \frac{\partial^3}{\partial s^3}x, \frac{\partial}{\partial s}c \right]
+3\left[ \frac{\partial^2}{\partial s^2}x, \frac{\partial^2}{\partial s^2}c \right]+\left[ \frac{\partial}{\partial s}x, \frac{\partial^3}{\partial s^3}c \right].
\end{equation}
Taking $x=x(s,t)$, $\delta=0$ and $\frac{\partial}{\partial s}\delta=0$, we obtain from \eqref{xsszero}, \eqref{calcula3delta} and \eqref{derivative4t0}  that
\begin{equation*}
G_{ssss}=(t_s^2+1)\frac{\partial^2}{\partial s^2}\delta(s,t).
\end{equation*}
\end{proof}

From propositions \ref{GsGssGsss} and \ref{GsGssGsssGssss} we obtain theorem \ref{thm:familyG}(3). Also, these propositions give another proof of theorem \ref{thm:singularplanar}(2). 

\section{Singularities of the improper affine map}

Consider the family $\tilde{G}:I\times U\times\R\to\R$ defined by
$$
\tilde{G}(s,x,z)=G(s,x)-z.
$$

\begin{lemma}
The discriminant set of $\tilde{G}$ coincides with the image of the improper affine map. 
\end{lemma}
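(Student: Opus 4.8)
The plan is simply to unwind the definition of the discriminant set and to feed in Proposition~\ref{prop:Gs}. Viewing $\tilde G$ as an unfolding of a function of the single state variable $s$ with parameters $(x,z)\in U\times\R$, its discriminant set is
$$
\mathcal D=\bigl\{(x,z)\in U\times\R \ :\ \exists\, s\in I,\ \tilde G(s,x,z)=0 \text{ and } \tilde G_s(s,x,z)=0\bigr\}.
$$
Since $\tilde G(s,x,z)=G(s,x)-z$, we have $\tilde G_s=G_s$, so the two defining conditions read $z=G(s,x)$ and $G_s(s,x)=0$. Thus the whole task is to show that the set of triples $(s,x,z)$ with $G_s(s,x)=0$ and $z=G(s,x)$, projected to the $(x,z)$-space, is exactly $\{q(s,t):(s,t)\in I\times J\}$.

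First I would record the elementary fact that, after shrinking $I\times J\times U$ if necessary, the map $(s,t)\mapsto(s,x(s,t))$ is a parametrization of the surface $M=\{(s,x):x=x(s,t(s,x))\}$ of Proposition~\ref{prop:Gs}, with inverse $(s,x)\mapsto(s,t(s,x))$. Indeed, if $x=x(s,t)$ then $[x-x(s,t),c(s,t)]=[0,c(s,t)]=0$, so by the uniqueness clause of the implicit function theorem used to define $t(s,x)$ we get $t(s,x)=t$; conversely, $x=x(s,t(s,x))$ holds on $M$ by definition, and $t\mapsto x(s,t)$ is an immersion near $t_0$ because $x_t(s_0,t_0)\neq0$, so the map is injective after contracting $J$. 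A consequence I will use is that $G(s,x(s,t))=g(s,t(s,x(s,t)))=g(s,t)$ for all $(s,t)\in I\times J$.

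Now I combine the two facts. By Proposition~\ref{prop:Gs}, $G_s(s,x)=0$ exactly when $(s,x)\in M$, i.e.\ $x=x(s,t)$ with $t=t(s,x)$. Hence $(x,z)\in\mathcal D$ iff there exist $s$ and $t:=t(s,x)$ with $x=x(s,t)$ and $z=G(s,x)=g(s,t)$, i.e.\ iff $(x,z)=(x(s,t),g(s,t))=q(s,t)$ for some $(s,t)\in I\times J$; this gives $\mathcal D\subseteq\operatorname{Im}q$. For the reverse inclusion I read the same chain backwards: given $(s,t)\in I\times J$, set $x=x(s,t)$ and $z=g(s,t)$; then $(s,x)\in M$, so $G_s(s,x)=0$, and $z=g(s,t)=G(s,x)$, so $\tilde G(s,x,z)=\tilde G_s(s,x,z)=0$, whence $q(s,t)=(x,z)\in\mathcal D$.

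The argument is essentially bookkeeping, so there is no serious obstacle; the one place that needs a little care is the identification $t(s,x(s,t))=t$ — equivalently, the statement that $M$ is the graph of the $t\leftrightarrow x$ correspondence over the image of $q$ — which is controlled by the uniqueness part of the implicit function theorem at the cost of possibly contracting the neighborhoods $I$, $J$, $U$ fixed in Section~\ref{section:generating}.
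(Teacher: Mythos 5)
Your proof is correct and takes essentially the same route as the paper's one-line argument: reduce the discriminant conditions to $\tilde G_s=G_s=0$ and $z=G(s,x)$, invoke Proposition~\ref{prop:Gs} to get $x=x(s,t)$, and identify $z=G(s,x)=g(s,t)$. You merely make explicit the bookkeeping identity $t(s,x(s,t))=t$ and the reverse inclusion, both of which the paper leaves implicit.
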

\begin{proof}
If $\tilde{G}=\tilde{G_s}=0$, then, by theorem \ref{thm:familyG}(1), $x=x(s,t)$. Thus $z=G(s,x(s,t))=g(s,t)$. 
\end{proof}

The following theorem is a direct consequence of theorem \ref{thm:familyG}:

\begin{thm}\label{thm:familyG1}
Take $(s,x,z)\in I\times U\times \R$:
\begin{enumerate}
\item
$(s,x,z)$ is an $A_2$ point for $\tilde{G}$  if and only if $x=x(s,t)$, $\delta(s,t(s,x))=0$ and $\frac{\partial}{\partial s}\delta(s,t(s,x))\neq 0$, or equivalently, 
$\alpha(s)\neq\tau_s(s)$. Moreover, the unfolding $\tilde{G}$ is versal at this point. 
\item
$(s,x,z)$ is an $A_3$ point for $\tilde{G}$  if and only if $x=x(s,t)$, $\delta(s,t(s,x))=0$, $\frac{\partial}{\partial s}\delta(s,t(s,x))=0$ and $\frac{\partial^2}{\partial s^2}\delta(s,t(s,x))\neq 0$, or equivalently, $\alpha(s)=\tau_s(s)$ and $\alpha'(s)\neq\tau_{ss}(s)$. 
Moreover, the unfolding $\tilde{G}$ is versal at this point. 
\end{enumerate}
\end{thm}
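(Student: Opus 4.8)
The plan is to deduce Theorem~\ref{thm:familyG1} from Theorem~\ref{thm:familyG} by a purely formal argument, exploiting that $\tilde G(s,x,z)=G(s,x)-z$ depends on the new parameter $z$ only through the additive term $-z$.

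First I would note that for every $j\ge 1$ one has $\partial^j\tilde G/\partial s^j=\partial^j G/\partial s^j$, since $z$ does not involve $s$. Consequently, for fixed $(x,z)$ the germ $s\mapsto\tilde G(s,x,z)$ has an $A_2$ or $A_3$ singularity at $s$ under exactly the same conditions as the germ $s\mapsto G(s,x)$: the vanishing and non-vanishing of $\tilde G_s,\tilde G_{ss},\tilde G_{sss},\tilde G_{ssss}$ coincide with those of $G_s,G_{ss},G_{sss},G_{ssss}$. In particular the set of $A_k$ points of $\tilde G$ is the preimage, under the projection $(s,x,z)\mapsto(s,x)$, of the set of $A_k$ points of $G$, so Theorem~\ref{thm:familyG}(2)--(3) translates verbatim into the stated equivalences with the conditions on $\delta$, and with $\alpha(s)$ versus $\tau_s(s)$.

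It remains to check that $\tilde G$ is a versal unfolding at its $A_2$ and $A_3$ points. Recall (see \cite{Giblin92}, page~149) that this amounts to maximality of the rank of a matrix whose columns are built from the mixed partials $\partial^{i+1}\tilde G/\partial s^i\partial u_\ell$, with $i=1,\dots,k-1$ and $u_\ell$ ranging over the unfolding parameters. For $\tilde G$ the parameters are $(x_1,x_2,z)$; since $\partial\tilde G/\partial x_\ell=\partial G/\partial x_\ell$ whereas $\partial\tilde G/\partial z\equiv-1$, the columns associated with $x_1,x_2$ are exactly the ones already shown to have the required rank in Propositions~\ref{versalA2} and~\ref{versalA3} (namely $G_x=(C,D)$, with $(G_x)_s=-(1+t_s^2)x_t\ne 0$ at an $A_2$ point and $[(G_x)_s,(G_x)_{ss}]\ne 0$ at an $A_3$ point), while the column associated with $z$ is identically zero. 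Adjoining a zero column does not change the rank, so the versality of $\tilde G$ follows immediately from that of $G$. Equivalently, one may invoke the general principle that a versal unfolding remains versal after suspension by one extra parameter on which it depends affine-linearly.

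I do not expect a genuine obstacle: the statement is a formal corollary of Theorem~\ref{thm:familyG}. The only point deserving care is to apply the versality criterion of \cite{Giblin92} with the enlarged parameter list $(x_1,x_2,z)$ — the extra parameter $z$ being precisely what makes the discriminant of $\tilde G$ a surface in $\R^3$, as used in the Lemma preceding the theorem — and to observe that its contribution to the versality matrix is a zero column and hence irrelevant to the rank.
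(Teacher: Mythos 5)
Your reduction of the $A_k$ conditions is fine and is exactly what the paper intends (the paper offers no proof, calling the theorem a direct consequence of Theorem \ref{thm:familyG}): since $\partial^j\tilde G/\partial s^j=\partial^j G/\partial s^j$ for $j\ge 1$ and $\tilde G$ shifts $G$ by the constant $-z$ in $s$, the $A_2$/$A_3$ loci of $\tilde G$ are the preimages of those of $G$ under $(s,x,z)\mapsto(s,x)$.

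However, your versality verification contains a genuine error: the column of the versality matrix associated with $z$ is \emph{not} zero, and treating it as an irrelevant zero column misidentifies the role of the extra parameter. You are applying the criterion that uses only the mixed partials $\partial^{i+1}/\partial s^i\partial u_\ell$ with $i\ge 1$; that is the criterion for versality relative to the \emph{bifurcation} set (constants quotiented out), which is all that Propositions \ref{versalA2} and \ref{versalA3} actually establish for $G$. What Theorem \ref{thm:familyG1} needs — and what Corollary \ref{prop:singAS} uses to conclude that the \emph{discriminant} of $\tilde G$ is a cuspidal edge or swallowtail — is versality in the full sense, where the matrix also contains the row $i=0$, i.e.\ the constant terms of the $\partial\tilde G/\partial u_\ell$ themselves. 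In that matrix the $z$-column is $(-1,0,\dots,0)^{T}$, and it is precisely this nonzero column that upgrades the rank: expanding along it reduces full versality of $\tilde G$ (rank $k$ of a $k\times 3$ matrix) to the rank-$(k-1)$ condition on the $x_1,x_2$ columns proved in Propositions \ref{versalA2} and \ref{versalA3}. Note that $G$ itself, having only two parameters, \emph{cannot} be fully versal at an $A_3$ point (an $A_3$ versal unfolding requires three parameters), so "adjoining a zero column" to the versality data of $G$ could never produce the versality of $\tilde G$ that the theorem asserts; the passage from $G$ to $\tilde G=G-z$ is not an inert suspension but the step that supplies the missing constant direction. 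The conclusion is correct, but the mechanism you give for it would, taken literally, prove the wrong statement (or, at an $A_3$ point, no statement at all).
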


\begin{corollary}\label{prop:singAS}
\begin{enumerate}
\item
If $\delta(s,t)=0$ but $\frac{\partial}{\partial s}\delta(s,t)\neq 0$, then $q(S)$ is a cuspidal edge at $q(s,t)$.
\item
If $\delta(s,t)=\frac{\partial}{\partial s}\delta(s,t)=0$ but $\frac{\partial^2}{\partial s^2}\delta(s,t)\neq 0$, then $q(S)$ is a swallowtail at $q(s,t)$.
\end{enumerate}
\end{corollary}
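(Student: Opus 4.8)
The plan is to deduce both statements from Theorem~\ref{thm:familyG1} and the Lemma that precedes it, using the standard dictionary between versal unfoldings of $A_k$ function singularities and the generic local models of wavefronts in $\R^3$. By that Lemma, the image of the improper affine map $q=(x,g)$ coincides with the discriminant set of the three-parameter family $\tilde G(s,x,z)=G(s,x)-z$, i.e.\ the set of $(x,z)\in U\times\R\subset\R^3$ for which the function $s\mapsto\tilde G(s,x,z)$ has a degenerate zero. Hence it suffices to identify the local type of this discriminant near the point $(x(s,t),g(s,t))$ attached to a given $(s,t)\in S$.

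Next I would read off the relevant information from Theorem~\ref{thm:familyG1}. Under the hypothesis of part (1), namely $\delta(s,t)=0$ and $\frac{\partial}{\partial s}\delta(s,t)\neq 0$, Theorem~\ref{thm:familyG1}(1) says that $\tilde G$ has an $A_2$ point at the point in question and that the unfolding $\tilde G$ is versal there. Under the hypothesis of part (2), namely $\delta(s,t)=\frac{\partial}{\partial s}\delta(s,t)=0$ and $\frac{\partial^2}{\partial s^2}\delta(s,t)\neq 0$, Theorem~\ref{thm:familyG1}(2) says that $\tilde G$ has an $A_3$ point there and that the unfolding is versal. So both the $A_k$-type of $s\mapsto\tilde G(s,\cdot)$ and the versality of the three-parameter unfolding (parameters $x_1,x_2,z$) are already in hand.

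It then remains to quote the classification of discriminants of versal unfoldings (see \cite{Giblin92}, and any reference on Legendrian/front singularities): the discriminant of a versal unfolding of an $A_2$ singularity with three parameters is, locally, diffeomorphic to the standard cuspidal edge in $\R^3$ (a semicubical cusp times a line), and the discriminant of a versal unfolding of an $A_3$ singularity with three parameters is, locally, diffeomorphic to the standard swallowtail in $\R^3$. Transporting these normal forms back through the identification of the discriminant with the image of $q$ yields conclusions (1) and (2), with $q(S)$ appearing in each case as the singular curve of the resulting surface.

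The only step that is genuinely delicate is the one that has already been carried out inside Theorem~\ref{thm:familyG1}: it is the versality established in Propositions~\ref{versalA2} and~\ref{versalA3} --- which is precisely what forces the model discriminant to be a cuspidal edge, respectively a swallowtail, rather than a more degenerate front, and which is where the standing assumptions $\delta_t\neq 0$ and \eqref{hypothesis1} enter. Granting Theorem~\ref{thm:familyG1}, the corollary is immediate, with no additional computation.
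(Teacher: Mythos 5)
Your proposal is correct and follows essentially the same route as the paper: the author likewise deduces the corollary directly from Theorem~\ref{thm:familyG1}, noting that $\tilde{G}$ has a versal $A_2$ (resp.\ $A_3$) point and then invoking the standard identification of the discriminant of such a versal unfolding with a cuspidal edge (resp.\ swallowtail). Your additional remarks on where the versality and the standing assumptions enter are accurate but not needed beyond what the paper already records.
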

\begin{proof}
In case (1), theorem \ref{thm:familyG1}(1) says that $\tilde{G}$ has an $A_2$ point and $\tilde{G}$ is versal at this point, so the discriminant set is a cuspidal edge. 
In case (2), theorem \ref{thm:familyG1}(2) says that $\tilde{G}$ has an $A_3$ point and $\tilde{G}$ is versal at this point, so the discriminant set is a swallowtail. 
\end{proof}

\begin{example}\label{example3}
Consider the improper affine sphere $q=(x,g)$ of example \ref{example1}. As shown in example \ref{example2}, the image of $S=\{(s,t)|\ s^2+t^2=1\}$ by the planar map $x$ is regular except at $(1,0)$, 
$(-\frac{1}{2},\frac{\sqrt{3}}{2})$ and $(-\frac{1}{2},-\frac{\sqrt{3}}{2})$, where there are ordinary cusps. By proposition \ref{prop:singAS}, we have that the image of $S$ by the immersion $q$ are cuspidal edges
at all points except the above three, where they are swallowtails (see figure \ref{swallow}). 

\begin{figure}[htb]
 \centering
 \includegraphics[width=0.30\linewidth]{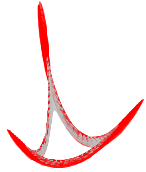}
 \caption{The image of the improper affine map of example \ref{example3} close to the singularity set. Observe
 the three swallowtails at points whose projections on the plane are the cusps of figure \ref{trescuspides}.  }
\label{swallow}
\end{figure}

\end{example}

\begin{example}
In section 6 of \cite{Galvez07}, a class of convex improper affine spheres is constructed from its singularities. In these examples,
the singularity set $S$ is defined by $t=0$ and thus $\tau(s)=0$. Moreover, $x_s\neq 0$. Thus we can define ${\tilde\alpha}(s)$ by the relation
${\tilde\alpha}(s)x_s+x_t=0$ at $t=0$, which is a variation of \eqref{definealpha}. In this case, the condition for $x(s,t)$ to be a regular point of $x(S)$ is ${\tilde\alpha}(s)\tau_s(s)\neq 1$, which is always 
satisfied. By corollary \ref{prop:singAS}(1), the singularity of the improper affine map is a cuspidal edge at each point $(s,0)$. One specific example of this construction, extracted from \cite{Galvez07}, 
is given by
\[
\begin{array}{cc}
x=&\left(\cos(s)\cosh(t)+\frac{3}{5}\cos(s)\sinh(t)-\frac{1}{5}\cos(3s)\sinh(3t), \right. \\
&\ \ \left. \sin(s)\cosh(t)-\frac{3}{5}\sin(s)\sinh(t)-\frac{1}{5}\sin(3s)\sinh(3t)\right),
\end{array}
\]
\[
\begin{array}{ll}
g=\frac{1}{100} &\left( \ 62t+10\cos(2s)(3-2\cosh(2t)+\cosh(4t)) \right.\\
 &\left. -24\cos(4s)\cosh^3(t)\sinh(t)-16\sinh(2t)-\sinh(6t) \right).
\end{array}
\]
\end{example}

\end{document}